\newtheorem{theorem}{Theorem}[section]
\newtheorem{thm}[theorem]{Theorem}
\newtheorem{lem}[theorem]{Lemma}
\newtheorem{proposition}[theorem]{Proposition}
\newtheorem{corollary}[theorem]{Corollary}
\newtheorem{assumption}[theorem]{Assumption}
\theoremstyle{definition}
\newtheorem{defn}[theorem]{Definition}
\newtheorem{ex}[theorem]{Example}
\theoremstyle{remark}
\newtheorem{rem}[theorem]{Remark}
\numberwithin{equation}{section}
 \DeclareMathAlphabet{\mathpzc}{OT1}{pzc}{m}{it}
  \newcommand{\mmmm}{\mathfrak{m}}
\def\lan{{\langle}}
\def\ran{{\rangle}}
\def\cal{\mathcal}
 \newcommand{\E}{\mathbb{E}}            
 \newcommand{\e}{\varepsilon}
 \newcommand{\p}{\partial}
 \newcommand{\Ll}{\langle}
 \newcommand{\Rr}{\rangle}
 \newcommand{\N}{\mathbb{N}}
 \newcommand{\R}{\mathbb{R}}
 \newcommand{\Z}{\mathbb{Z}}
 \newcommand{\PP}{\mathbb{P}}
 \newcommand{\mcl}{\mathcal}
 \newcommand{\Be}{\begin{equation}}
 \newcommand{\Ee}{\end{equation}}
 \newcommand{\Bs}{\begin{split}}
 \newcommand{\Es}{\end{split}}
  \newcommand{\Bes}{\begin{equation*}}
 \newcommand{\Ees}{\end{equation*}}
 \newcommand{\BT}{\begin{thm}}
 \newcommand{\ET}{\end{thm}}
 \newcommand{\Bp}{\begin{proof}}
 \newcommand{\Ep}{\end{proof}}
 \newcommand{\BL}{\begin{lem}}
 \newcommand{\EL}{\end{lem}}
 \newcommand{\BP}{\begin{proposition}}
 \newcommand{\EP}{\end{proposition}}
 \newcommand{\BC}{\begin{corollary}}
 \newcommand{\EC}{\end{corollary}}
 \newcommand{\BR}{\begin{rem}}
 \newcommand{\ER}{\end{rem}}
 \newcommand{\BD}{\begin{defn}}
 \newcommand{\ED}{\end{defn}}
 \newcommand{\BI}{\begin{itemize}}
 \newcommand{\EI}{\end{itemize}}
 \newcommand{\eqn}{equation}
 \newcommand{\bg}{\big}
  \newcommand{\const}{\mathop{{\rm const}}\nolimits}
\begin{document}
\title
[Exponential ergodicity and Regularity]
{Exponential ergodicity and regularity
for equations with L\'evy noise}
\author[E. Priola]{Enrico Priola}
\address{Dipartimento di Matematica, Universit\`a di Torino,
via Carlo Alberto 10 \\ 10123 Torino, Italy} \email{enrico.priola@unito.it}
\author[A. Shirikyan]{Armen Shirikyan}
\address{Department of Mathematics, University of Cergy--Pontoise, CNRS UMR 8088, 2 avenue Adolphe Chauvin, 95302 Cergy--Pontoise, France}
\email{Armen.Shirikyan@u-cergy.fr}
\author[L. Xu]{Lihu Xu}
\address{TU Berlin, Fakult\"{a}t II, Institut f\"{u}r Mathematik,
Str$\alpha \beta$e des 17. Juni 136, D-10623 Berlin, Germany}
\email{xu@math.tu-berlin.de}
\author[J. Zabczyk]{Jerzy Zabczyk}
\address{Institute of Mathematics, Polish Academy of Sciences, P-00-950 Warszawa, Poland}
\email{zabczyk@impan.pl}

\thanks{The first author
was supported by the M.I.U.R. research project Prin 2008
 ``Deterministic and stochastic methods in the study of
 evolution problems''. The third author gratefully acknowledges the
support by Junior program \emph{Stochastics} of Hausdorff Research
Institute for Mathematics. His research is partially supported by
the European Research Council under the European Union's Seventh
Framework Programme (FP7/2007-2013) / ERC grant agreement
nr.~258237. The fourth author gratefully acknowledges the support by the
Polish Ministry of Science and Higher Education grant ``Stochastic
equations in infinite dimensional spaces'' N N201 419039}

 \maketitle

\begin{abstract}
\label{abstract} \noindent  We prove exponential convergence to the
invariant measure, in the total variation norm, for  solutions of SDEs
driven by $\alpha$-stable noises in finite and in infinite
dimensions. Two approaches are used.  The first one is based on Harris theorem,
and the second on
Doeblin's coupling argument~\cite{doeblin-1940}. Irreducibility, Lyapunov function techniques, and uniform strong Feller property play an essential role in both
approaches.
 We concentrate on two classes of Markov processes:
  solutions of finite-dimensional equations,
  introduced in~\cite{Pr10},
 with  H\"older continuous drift and
  a general, non-degenerate, symmetric $\alpha$-stable noise,
   and  infinite-dimensional parabolic systems,
 introduced in~\cite{PZ09}, with Lipschitz  drift
 and   cylindrical $\alpha$-stable noise. We show  that if the
nonlinearity is bounded, then the processes are  exponential mixing.
This improves, in particular,  an earlier result established
in~\cite{PXZ10}  using the weak convergence induced  by the
 Kantorovich--Wasserstein metric.

\medskip
\noindent {\bf Keywords}: stochastic PDEs,  $\alpha$-stable noise,
H\"older continuous drift, Harris' theorem, coupling, total
variation, exponential mixing, Ornstein--Uhlenbeck processes.

\medskip
\noindent
{\bf Mathematics Subject Classification (2000)}:
 \ {60H15, 47D07,  60J75, 35R60}.
\end{abstract}

\tableofcontents

\section{Introduction}
This paper is concerned with ergodic properties of the stochastic equation
\begin{\eqn} \label{e:MaiSPDE}
dX_t=[AX_t+F(X_t)]dt+dZ_t, \quad X_0=x,
\end{\eqn}
both in finite- and infinite-dimensional real Hilbert spaces~$H$.
Here~$A$ is a linear operator, $F$~is a bounded mapping, and~$Z$ is  a
symmetric $\alpha$-stable process.
  Under suitable conditions, we  establish exponential
convergence of the solutions to the invariant measure in the
variation norm. Note that many nonlinear
stochastic PDEs, including semilinear heat equations perturbed by
L\'evy noise, can be written in the form~\eqref{e:MaiSPDE} with an infinite-dimensional phase space~$H$.

Irreducibility and uniform strong Feller properties
 play an essential role in our approach.
  They are established in the paper when the space~$H$ is
finite-dimensional, $Z$ is a non-degenerate symmetric
$\alpha$-stable process,  and~$F$ is an $\eta$-H\"older continuous function
 with $1-\frac \alpha2< \eta \leq 1$ and $1<\alpha<2$.
 Under stronger assumptions on the drift~$F$ and on the noise process $Z$,
   those properties were derived  in~\cite{PZ09} in infinite dimensions. The
  finite-dimensional result established in this paper is an important contribution of independent interest.

Stochastic PDEs  driven by L\'evy noises have been intensively
studied since some time; e.g., see the
papers~\cite{AWZ98,AMR09,PeZa06,Ok08,MaRo09,PZ09,XZ09}, the
book~\cite{PeZa07}, and the references therein. Invariant measures
and long-time asymptotics for stochastic systems driven by L\'evy
noises were studied in a number of papers. In particular, the linear
case ($F \equiv 0$) was investigated in~\cite{sato1,zabczyk83} for finite-dimensional spaces and in~\cite{Ch1987,PrZa09-2,FuXi09} for the infinite dimension. The case of nonlinear equations was
studied in~\cite{Rusinek,PeZa07, Mas, XZ09,XZ10}. However, there are
no many results on ergodicity and exponential mixing (cf.~\cite{XZ10, Ku09, PXZ10}). The paper~\cite{Ku09}  studied the
exponential mixing of finite-dimensional stochastic systems with
jump noises, which include one-dimensional SDEs driven by
$\alpha$-stable noise.

 Some ergodic properties for  SPDEs like~\eqref{e:MaiSPDE}
 were also studied  in~\cite{PXZ10}.
It was proved that if the supremum norm of~$F$ is \emph{small}, then
there exists a unique invariant measure, which is exponential mixing
under the  weak convergence induced by the Kantorovich--Wasserstein
metric.
Here we improve substantially this
 result, showing that the convergence to the invariant measure holds
 exponentially fast in the total variation norm
 without any smallness assumption on~$F$.
 To prove this result, we have to impose a slightly stronger
 regularity condition on the noise compared to that
of~\cite{PXZ10}; this is, however, a really mild assumption (see Remark~\ref{compare} and Example~\ref{comp}).

 As mentioned before,
 we also establish exponential mixing
  in the total variation norm  for finite-dimensional stochastic
 equations of the form~\eqref{e:MaiSPDE} with a less regular drift term~$F$ and a more general noise~$Z$.
 It seems that, even in  one dimension
  (when $Z$ reduces to a  standard symmetric
rotationally invariant $\alpha$-stable noise), our result on
exponential mixing is new (cf.~\cite{XZ09,Ku09}).

We have two proofs for the exponential mixing results. Even though they give the same result, we included both proofs in the paper since they are based on some additional properties of independent interest, such as exponential estimates for hitting times of balls.
The first proof is based on Harris' theorem, while the other uses
 the classical coupling argument, see Section~\ref{s2.3} and also~\cite{lindvall2002}. In both approaches, irreducibility and uniform
strong Feller property play a crucial role. The Harris  approach
only needs to check some conditions involving Lyapunov functions,
but it is not intuitive. The coupling proof is more involved, but
gives an intuition for understanding the way in which  the
dynamics converges to the ergodic measure.

Let us sketch our methods for proving the  well-posedness and the
structural properties of finite-dimensional stochastic systems,
since it has independent interest. To prove the existence and
pathwise uniqueness of  solutions, we only need to modify slightly the argument in~\cite{Pr10}. We stress that the
condition  $1-\frac \alpha2< \eta \leq 1$ is needed to have
existence and  uniqueness of solutions (cf.~\cite{Pr10}). The
irreducibility and uniform strong Feller property will be
established in the following two steps. First, we prove
irreducibility and (uniform) gradient estimates for
finite-dimensional Ornstein--Uhlenbeck processes driven by non-degenerate
symmetric $\alpha$-stable processes (related  gradient estimates
under different assumptions from ours are given in  the recent paper~\cite{W10}).
Then we proceed as in~\cite{PZ09} and deduce
irreducibility and uniform gradient estimates for  solutions to~\eqref{e:MaiSPDE}.
Note that if $\eta < 1$ then  the deterministic
equation may have many solutions as classical examples show.
Currently, there is a great interest in understanding pathwise
uniqueness for SDEs when~$F$ is not Lipschitz,
see the  references given in~\cite{DPF10, Pr10}.

\smallskip
The paper is organized as follows. In  Section~\ref{s:NM},
we formulate basic structural properties of the solutions
of~\eqref{e:MaiSPDE} and our main ergodic
results---Theorems~\ref{t:MaiThm} and~\ref{t:ErgFin}. In Section~\ref{s:F}, we
concentrate on proving the new structural properties of
finite-dimensional systems.
Section~\ref{s3} contains  decay $L_p$-estimates for solutions
of~\eqref{e:MaiSPDE}, which are needed to prove exponential
 ergodicity; here we concentrate on the infinite-dimensional case
 since in finite dimensions these estimates are straightforward.
The two proofs for the exponential mixing of infinite dynamics are
established in Sections~\ref{s:Har} and~\ref{s4}, respectively, the
former applying Harris' theorem and the latter using coupling
argument. Section~\ref{s4} is quite involved, in particular,
exponential estimates for the first hitting time of balls are of
independent interest. In Section~\ref{s6}, we show the exponential
 ergodicity for finite-dimensional  systems
 (Theorem~\ref{t:ErgFin}) in a sketchy way. We have only shown the full
details for the proof of Theorem~\ref{t:MaiThm} concerning SPDEs,
since the finite-dimensional result can be proved by similar and
easier methods.

\medskip
{\bf Acknowledgements.}
 We would like to thank C.~Odasso for patiently discussing with us
his paper~\cite{Od07} and writing a note for us on the proof of
inequality~\eqref{e:TauPro}.
 We also would like to thank M.~Hairer for pointing out to us
the  proof of Theorem~\ref{t:MaiThm} by the Harris  approach.

\section{Main results} \label{s:NM}
\subsection{Notations and assumptions}
Let~$H$  be a real separable Hilbert space with an inner product $\Ll\cdot,\cdot\Rr$
and the corresponding norm~$|\cdot|$. We denote by~$\{e_k\}_{k \geq1}$ an orthonormal basis, so that any vector $x\in H$ can be written as $x=\sum_{k\geq1} x_k e_k$, where $\sum_k|x_k|^2<\infty$.
Denote by~$B_b(H)$ the Banach space  of bounded Borel-measurable functions $f:H \rightarrow \R$ with the supremum norm
$$
\|f\|_{0}:=\sup_{x \in H} |f(x)|.
$$

\smallskip
Let $\mcl B(H)$ be the Borel $\sigma$-algebra on $H$ and let~$\mcl P(H)$ be the set of probabilities on $(H,\mcl B(H))$. Recall that the total variation distance between two measures $\mu_1, \mu_2 \in \mcl P(H)$ is defined by
$$
\|\mu_1-\mu_2\|_{\rm TV}
= \frac 12 \sup_{\stackrel{f \in B_b(H)}{\|f\|_0=1}} |\mu_1(f)-\mu_2(f)|
=\sup_{\Gamma\in\mcl B(H)}|\mu_1(\Gamma)-\mu_2(\Gamma)|.
$$

Let $z(t)$ be a one-dimensional symmetric $\alpha$-stable process with $0<\alpha<2$. Its infinitesimal generator~$\mcl A$ is given by
\begin{equation} \label{e:fraclap}
\mcl A f(x):=\frac{1}{C_{\alpha}} \int_{\mathbb{R}}
\frac{f(y+x)-f(x)}{|y|^{\alpha+1}}dy, \;\; x \in \R,
\end{equation}
where $C_{\alpha}= -\int_{\mathbb{R}} (\cos y-1)\frac{dy}{|y|^{1+\alpha}}$;
see~\cite{sato} and~\cite{ARW00}.
It is well known that~$z(t)$ has the following characteristic function:
\begin{\eqn*}
\E [e^{i \lambda z(t)}]=e^{-t|\lambda|^{\alpha}},
\end{\eqn*}
$t \ge 0$, $\lambda \in \R$. A multidimensional generalization of $z(t)$ is obtained by considering
an $n$-dimensional non-degenerate  symmetric $\alpha $-stable  process
     $Z=(Z_t)$. This is a L\'evy process
     with the additional
     property that
 \begin{align} \label{itol}
 \E[ e^{i \langle  Z_t ,  u\rangle }]  = e^{- t \psi(u)}, \quad
  \psi(u)= - \int_{\R^d} \Big(  e^{i \langle u,y \rangle }
   - 1 - \, { i \langle u,y
\rangle} \, 1_{ \{ |y| \le 1 \} } \, (y) \Big ) \nu (dy),
  \end{align}
 $u \in \R^n$, $t \ge 0$,  where
   the L\'evy (intensity) measure $\nu$
   is  of the form
 \begin{align} \label{spec}
 \nu (D) = \int_{S} \mu(d \xi) \int_0^{\infty} 1_D (r \xi)
 \frac{dr}{r^{1+ \alpha}}, \;\;\; D \in {\cal B}(\R^n),
 \end{align}
 for some symmetric, non-zero  finite measure $\mu$
  concentrated on the
  unit sphere $S =  \{ y \in \R^d \, : \, |y| = 1 \} $
  (see \cite[Theorem 14.3]{sato}).
Note that  formula~\eqref{spec} implies that
$
  \psi(u)=   c_{\alpha}
    \int_{S} |\langle u,  \xi \rangle |^{\alpha} \mu (d \xi),
    \; u \in \R^n
$
(see also \cite[Theorem 14.13]{sato}).
   The non-degeneracy hypothesis on $Z$ is
   the assumption that
   there exists a positive
 constant~$C_{\alpha}$ such that, for any $u \in \R^n$,
 \begin{align} \label{nondeg}
 \psi (u) \ge C_{\alpha} |u|^{\alpha}.
\end{align}
This is equivalent to the fact that the support of $\mu$ is not contained in a proper linear subspace of $\R^n$ (see~\cite{Pr10} for more details).
   Recall that  the infinitesimal
 generator~$\cal A$ of the  process $Z$ is given
  on the space of
  infinitely differentiable functions with compact support
 $C^{\infty}_c(\R^n)$ by the formula
 $$
 {\cal A}f(x)=\int_{\R^d} \big(  f (x+y) -  f (x)
   - 1_{ \{  |y| \le 1 \} } \, \langle y , D f (x) \rangle \big)
   \, \nu (dy), \;\; f \in C^{\infty}_c(\R^n),
 $$
see \cite[Section 31]{sato}.  Note that
 $Z_t$ $= \sum_{1 \leq j \leq n} \beta_{j} z_j(t) e_j$
(where $\{z_j(t)\}_{1 \leq j \leq n}$ are i.i.d. one-dimensional
symmetric $\alpha$-stable processes) is in particular a
non-degenerate symmetric $\alpha$-stable process if  each $\beta_j
\not =0.$ \vskip 1mm

We will make two sets of assumptions on~\eqref{e:MaiSPDE} depending
on the dimension of the Hilbert space $H$. They are  similar but
more restrictive if $\dim H  = \infty$.

\begin{assumption} \label{a:A2}
 $[\dim H=n < \infty ]$
\begin{itemize}
\item[(A1)]
 $A$ is an $n \times n$ matrix and $\max_{1\leq i \leq n}
Re(\gamma_k)<0$, where $\gamma_1, \ldots, \gamma_n$ are the
eigenvalues of $A$  counted according to their multiplicity.

\item[(A2)]
 $Z = (Z_t)$ is a symmetric  non-degenerate $n$-dimensional
$\alpha$-stable process with $1 <  \alpha<2$.

\item[(A3)]
$F: H \rightarrow H$ is bounded and $\eta$-H\"older continuous with $1-\frac \alpha2< \eta \leq 1$.
\end{itemize}
\end{assumption}

\begin{assumption} \label{a:A}
$[\dim H= \infty]$
\begin{itemize}
\item[(A1)] $A$ is a dissipative operator defined by
$$
A=\sum_{k \geq 1} (-\gamma_k) e_k \otimes e_k
$$
with $0<\gamma_1 \leq \gamma_2 \leq \ldots \leq \gamma_k \leq
\ldots$ and $\gamma_k \rightarrow \infty$ as $k \rightarrow \infty$.
\item[(A2)] $Z_t$ is a cylindrical $\alpha$-stable process with
$Z_t=\sum_{k \geq 1} \beta_k z_k(t) e_k$, where $\{z_k(t)\}_{k \geq
1}$ are i.i.d. symmetric $\alpha$-stable processes with
$0<\alpha<2$ and~$\beta_k$ are positive constants such that  $\sum_{k \geq 1}
\frac{\beta^{\alpha}_k}{\gamma^{1-\alpha \e}_k}<\infty$ for some  $\e \in (0,1)$.
\item[(A3)]
$F: H \rightarrow H$ is Lipschitz and bounded.
\item[(A4)] There exist some $\theta \in (0,1)$ and $C>0$
 so that $\beta_k \geq
C \gamma_k^{-\theta+1/\alpha}$.
\end{itemize}
\end{assumption}

\begin{rem} \label{compare} Let us comment on Assumption~\ref{a:A}.
The  Lipschitz property guarantees that Eq.~\eqref{e:MaiSPDE} has a
unique solution, and~(A4) ensures that the solution is  strong Feller. The
condition $\sum_{k \geq 1} \frac{\beta^{\alpha}_k}{\gamma^{1-\alpha
\e}_k}<\infty$ in (A2) implies that the solution
to~\eqref{e:MaiSPDE}  evolves in linear subspace with compact embedding
into  $H$,  see Section~4. Note that in~\cite{PXZ10} it is only
required that (A2) holds for $\epsilon =0$ (i.e.,
 that $X^x_t \in H$, a.s.).
However, our present assumption with $\epsilon>0$ is  really
a mild assumption (compare also with Example~\ref{comp}).
\end{rem}

\subsection{Structural properties of solutions}
In this subsection we formulate the structural properties of
solutions in both finite  and infinite dimensions;
see Theorems~\ref{t:Sol} and~\ref{t:SolEU}. These structural properties will play an important role in proving the exponential ergodicity.
The proof of the next theorem is quite involved  and  is postponed
to Section~3.

\begin{thm} \label{t:Sol} Let $H = \R^n$.
Under Assumption~\ref{a:A2}, there exists a unique  strong solution $X^x_t$ for~\eqref{e:MaiSPDE}.
 The solutions $(X_t^x)_{x\in H}$ form a Markov process with  transition semigroup $P_t$,
  $$
P_t f(x)=\E[f(X^x_t)],  \;\;\; f \in B_b(H),
$$
  which is irreducible and such that
 there exists $C>0$ with
\begin{align} \label{grad12}
|P_t f(x) -  P_t f(y)|  \le
  \frac{C  \| f\|_0}{t^{1/\alpha} \wedge 1}
   |x-y|,\;\; x, y \in H,\; t>0,\; f \in B_b(H).
\end{align}
\end{thm}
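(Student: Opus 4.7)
The proof naturally splits into three parts: (i) existence and pathwise uniqueness of strong solutions, together with the Markov property; (ii) irreducibility and gradient estimates for the linear Ornstein--Uhlenbeck semigroup; and (iii) transfer of those properties to the nonlinear semigroup $P_t$.

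For (i), I would follow the Zvonkin-type argument of~\cite{Pr10} with only minor modifications. Let $L_0 = \mathcal{A} + \langle Ax, D\cdot\rangle$ be the generator of the OU process $dY_t = AY_t\,dt+dZ_t$, and consider the vector-valued resolvent equation $\lambda u - L_0 u = F$ on $\R^n$. Bessel-potential/Schauder-type estimates for the symmetric non-degenerate $\alpha$-stable generator, combined with the hypothesis $\eta > 1 - \alpha/2$, give a solution $u$ that is better than Lipschitz: specifically, $Du$ is $\eta'$-H\"older with $\eta' = \eta + \alpha - 1 > 0$. Choosing $\lambda$ large enough forces $\|Du\|_0 < 1/2$, so $\Phi(x) := x + u(x)$ is a $C^1$-diffeomorphism of $\R^n$ with Lipschitz inverse. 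It\^o's formula applied to $\Phi(X_t)$ then converts~\eqref{e:MaiSPDE} into an SDE with Lipschitz drift driven by the same stable process $Z_t$, for which strong existence and pathwise uniqueness are classical. Inverting $\Phi$ returns a unique strong solution to~\eqref{e:MaiSPDE}, and the Markov property follows in the usual way.

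For (ii), write the OU semigroup as $R_t f(x) = \int f(e^{tA}x + y)\,\mu_t(dy)$ with $\mu_t$ the law of $Z_t^A := \int_0^t e^{(t-s)A}\,dZ_s$. Non-degeneracy of~$\mu$ together with $\max_k \RE \gamma_k < 0$ implies that the characteristic function $\hat\mu_t(\xi)$ admits a bound of the form $|\hat\mu_t(\xi)| \le \exp(-c\,t\,|\xi|^\alpha)$ for $t\in(0,1]$. Hence $\mu_t$ has a smooth density $p_t$ with
\[
\|p_t\|_{L^1} = 1, \qquad \|D p_t\|_{L^1} \le C\,t^{-1/\alpha}, \qquad 0<t\le 1,
\]
obtained by Fourier inversion and elementary estimates on $\int|\xi|\,e^{-ct|\xi|^\alpha}d\xi$. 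This immediately yields both the irreducibility of $R_t$ (since $e^{tA}$ is invertible and $p_t$ is strictly positive) and a gradient bound $|R_t f(x) - R_t f(y)| \le C(t^{-1/\alpha}\wedge 1)\|f\|_0\,|x-y|$.

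For (iii), I would follow the strategy of~\cite{PZ09}: use the Duhamel representation
\[
P_t f(x) = R_t f(x) + \int_0^t R_{t-s}\bigl(\langle F,\, D P_s f\rangle\bigr)(x)\,ds,
\]
interpreted in a mild sense, and then iterate using the OU gradient estimate and $\|F\|_0<\infty$. A standard Gronwall-type argument with the singular kernel $(t-s)^{-1/\alpha}$ (integrable since $\alpha>1$) upgrades the linear gradient bound to~\eqref{grad12} for $P_t$. Irreducibility of $P_t$ then follows either from irreducibility of $R_t$ plus the boundedness of $F$ via a support-type argument, or by combining the gradient estimate with positivity of the OU density. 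The main technical obstacle is step (iii): because Girsanov's theorem for $\alpha$-stable noise is delicate, one must control the Duhamel iteration very carefully, and it is precisely here that $\alpha>1$ (for integrability of $(t-s)^{-1/\alpha}$) and $\eta>1-\alpha/2$ (for the Schauder regularity needed in step (i), which in turn justifies working with $D P_s f$) enter in an essential way.
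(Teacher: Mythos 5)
Your overall architecture (well-posedness via the Zvonkin-type results of~\cite{Pr10}, a gradient estimate for the linear OU semigroup, a Duhamel/Gronwall transfer in the spirit of~\cite{PZ09}, and irreducibility of the OU part plus a support argument using $\|F\|_0<\infty$) is the same as the paper's, but there is a genuine gap at the heart of step (ii). The bound $\|Dp_t\|_{L^1}\le Ct^{-1/\alpha}$ does \emph{not} follow from ``elementary estimates on $\int|\xi|e^{-ct|\xi|^\alpha}d\xi$'': that integral controls only the sup norm of $Dp_t$, and after scaling it gives the much worse order $t^{-(n+1)/\alpha}$, whereas what \eqref{gr} (and hence \eqref{grad12}) requires is the $L^1$ norm in space of the derivative kernel, since $|D_lR_tf(x)|\le\|f\|_0\,\|Dp_t\|_{L^1}$ up to a factor coming from $e^{tA}$. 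Upgrading pointwise Fourier bounds to an $L^1$ bound needs spatial decay of the rescaled kernel, i.e.\ repeated integration by parts in the Fourier integral, and this is exactly where the argument breaks for a general non-degenerate symmetric stable symbol: $\psi(u)=c_\alpha\int_S|\langle u,\xi\rangle|^\alpha\mu(d\xi)$ is not $C^2$ (the large-jump part of $\nu$ has no moments of order $\ge\alpha$), and $\mu$ may be purely atomic, so $\nu$ need not have any smooth density either (which is also why Bismut--Elworthy--Li formulas \`a la~\cite{takeuchi} are unavailable). The paper's proof hinges on the decomposition $\psi=\psi_1+\psi_2$ into the small-jump part, which is $C^\infty$ with $|D\psi_1(u)|\le c|u|$ and, after the rescaling $w=v/t^{1/\alpha}$, yields a Schwartz kernel $g_t$ with $\sup_w|w^\beta g_t(w)|$ bounded uniformly in $t\in(0,1]$ by integration by parts, and the large-jump part, whose contribution is the characteristic function of a genuine probability measure $\mu_t$ (an OU integral of a compound Poisson process) and is absorbed through $\varphi_t=g_t*\mu_t$, $\|\varphi_t\|_{L^1}\le\|g_t\|_{L^1}$. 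Without this splitting your step (ii) does not deliver \eqref{gr}.

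A second, smaller gap concerns irreducibility: you assert that $p_t$ is ``strictly positive'', but positivity (or even full support) of the law of $Z_A(t)=\int_0^te^{A(t-s)}dZ_s$ is not automatic --- the factors $e^{A(t-s)}$ distort the jumps, the spectral measure may charge only finitely many directions, and Fourier inversion carries no sign information. The paper proves full support of $Z_A(t)$ by a separate argument (Theorem~\ref{irred}): split $\nu=\nu_1+\nu_2$ with $\nu_1$ finite and supported on small cones around directions $\pm a_k$ spanning $\R^n$, so that the corresponding part of $Z_A$ is a compound Poisson OU integral, and then place $2n$ jumps near prescribed positions shortly before time $t$ to land within any prescribed ball. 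Your transfer of irreducibility from the OU process to $X^x$ via boundedness of $F$ does match the paper (Proposition~\ref{t:IrrFin}); it is the OU step itself that is missing. (Minor point: the bound should read $C\|f\|_0|x-y|/(t^{1/\alpha}\wedge1)$; the factor $t^{-1/\alpha}\wedge1$ you wrote does not blow up as $t\to0$.)
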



\vskip 2mm The following infinite-dimensional result is
analogous  to the previous one
 and is proved in~\cite{PZ09}. Note  that the noise $Z$
 considered here reduces in finite dimension
   to a  particular case of  the noise in Theorem~\ref{t:Sol}.

\begin{thm} \label{t:SolEU}
 Under
 Assumption~\ref{a:A},  there exists a unique mild solution $X^x_t$ for~\eqref{e:MaiSPDE},
\begin{\eqn} \label{e:MilSol}
X^x_t=e^{At}x+\int_0^t e^{A(t-s)} F(X^x_s)ds+\int_0^t e^{A(t-s)} dZ_s.
\end{\eqn}
The solutions $(X_t^x)_{x\in H}$ form a Markov process with the transition semigroup $P_t$.
  The process is  irreducible and there exists $C>0$ such that
\begin{align} \label{grad1}
|P_t f(x) -  P_t f(y)|  \le
  \frac{C  \| f\|_0}{t^{1/\theta} \wedge 1}
   |x-y|,\;\; x, y \in H,\; t>0,
\end{align}
where $\theta$ is given in (A4) of Assumption~\ref{a:A}.
\end{thm}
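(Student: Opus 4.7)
The plan is to split the argument into four stages: (i) well-posedness of the mild solution via a Banach fixed-point scheme, (ii) the Markov property, (iii) the gradient estimate and irreducibility for the linear Ornstein--Uhlenbeck dynamics, and (iv) transfer of these structural properties to the nonlinear equation through a perturbation argument. The dissipativity of $A$, the Lipschitz boundedness of $F$, and the $\alpha$-stable structure of $Z$ cooperate to make each stage tractable, while assumption (A4) encodes the precise tradeoff between the noise intensities $\beta_k$ and the dissipation rates $\gamma_k$ that fixes the exponent $1/\theta$ in \eqref{grad1}.

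For stage (i), I would first define the stochastic convolution
\[
W_A(t) = \int_0^t e^{A(t-s)}\,dZ_s = \sum_{k\geq 1} \beta_k \Big(\int_0^t e^{-\gamma_k(t-s)}\,dz_k(s)\Big) e_k,
\]
and use the characteristic function of one-dimensional stable integrals together with the summability hypothesis $\sum_k \beta_k^\alpha/\gamma_k^{1-\alpha\e} < \infty$ to show that $W_A(t)$ takes values almost surely in a subspace $H_\e\subset H$ with compact embedding. Given this, the map $X \mapsto e^{At}x + \int_0^t e^{A(t-s)} F(X_s)\,ds + W_A(t)$ is a contraction on $L^\infty([0,T];H)$ for short $T$, since $F$ is Lipschitz and $\|e^{At}\|\leq 1$; iteration yields global existence and pathwise uniqueness of the mild solution \eqref{e:MilSol}. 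Stage (ii) is then immediate from the independent-increment structure of $Z$ and the flow representation.

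Stage (iii) is the main technical step. For the Ornstein--Uhlenbeck semigroup $R_tf(x):=\E f(e^{At}x+W_A(t))$, I would analyze the law of $W_A(t)$: its finite-dimensional projections are absolutely continuous with smooth densities, thanks to non-degeneracy of each one-dimensional stable component. A Bismut--Elworthy-type identity, or a direct Fourier/density estimate for the cylindrical stable distribution, then produces
\[
|\nabla R_t f(x)| \leq \frac{C\|f\|_0}{t^{1/\theta}\wedge 1},
\]
where assumption (A4), $\beta_k\geq C\gamma_k^{-\theta+1/\alpha}$, is precisely what balances the mode-wise decay so as to yield the exponent $1/\theta$. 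Irreducibility of $R_t$ then follows from the full support of the cylindrical stable law on $H_\e$. Stage (iv) transfers both properties to $P_t$ via the Duhamel representation linking $P_t$ and $R_t$ combined with a bootstrapping argument exploiting the boundedness of $F$, following the scheme of \cite{PZ09}.

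The principal obstacle is stage (iii) in infinite dimensions: one cannot appeal to Cameron--Martin or the classical Malliavin calculus available in the Gaussian setting, so the gradient bound must be extracted from the genuinely non-Gaussian structure of the cylindrical stable convolution. The delicate point is to control, uniformly in $x$, the series $\sum_k$ of mode-wise contributions appearing in the density estimate; this is why hypotheses (A2) and (A4) appear together and, as indicated in Remark~\ref{compare}, cannot be significantly relaxed. A secondary obstacle in stage (iv) is that $1/\theta>1$ makes the singularity $t^{-1/\theta}$ non-integrable, so the perturbation cannot be closed by a straightforward Gronwall inequality and must instead be iterated with care.
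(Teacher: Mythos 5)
Your outline is essentially the route the paper itself relies on: Theorem~\ref{t:SolEU} is not proved in the paper but quoted from \cite{PZ09}, whose argument proceeds exactly as you propose (contraction mapping for the mild solution using Lipschitz bounded $F$ and the stochastic convolution estimates of Lemma~\ref{ser}, coordinate-wise Fourier/density gradient estimates for the Ornstein--Uhlenbeck semigroup in which (A2) and (A4) govern the mode-wise scales, full support of the stable convolution for irreducibility, and a transfer of both properties to the nonlinear semigroup), and this is also the architecture the paper executes in finite dimensions in Section~\ref{s:F}. Two small cautions: the workable mechanism in your stage (iii) is the direct product-density/Fourier estimate rather than a Bismut--Elworthy-type identity (the paper explicitly remarks that such formulas are not applicable for its stable L\'evy measures, though in the cylindrical diagonal case the one-dimensional densities are smooth), and irreducibility of $P_t$ is transferred from the Ornstein--Uhlenbeck process by the bounded-drift support argument in the spirit of Proposition~\ref{t:IrrFin} (cf.\ Lemma~\ref{p:Irr}), not through a Duhamel representation of the semigroup.
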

 \begin{rem}
 Note if $\dim H =  \infty$ then, in general,  trajectories of $(X_t^x)$ do not have a c\`adl\`ag modifications (see~\cite{BGIPPZ}).
\end{rem}

\subsection{Ergodic results for finite-dimensional equations}
\label{s2.2}

Let us denote by $(P_t)_{t\geq 0}$ the Markov semigroup associated with~\eqref{e:MaiSPDE} and by $(P^{*}_t)_{t \geq 0}$ the dual semigroup acting on $\mcl P(H)$.

 The main result for the finite-dimensional case is as follows:
  \begin{thm} \label{t:ErgFin}
Under Assumption~\ref{a:A2}, the system~\eqref{e:MaiSPDE} is ergodic and
exponentially mixing. More precisely, there exists $\mu \in \mcl P(H)$ such that, for  any $p \in (0, \alpha)$ and any measure $\nu \in \mcl P(H)$ with finite $p^{\rm th}$ moment, we have
\begin{equation} 
\|P^{*}_t \nu- \mu\|_{\rm TV} \leq C e^{-ct}\left(1+\int_H |x|^{p}
\nu(dx)\right),
\end{equation}
where $C=C(p, \alpha, A, \|F\|_{0})$ and $c=c(p,
\alpha, A, \|F\|_{0})$.
\end{thm}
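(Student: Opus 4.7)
The plan is to deduce Theorem~\ref{t:ErgFin} from Harris' theorem (in the Hairer--Mattingly formulation): for a Markov semigroup to be geometrically ergodic in the $V$-weighted total variation norm, it suffices to produce a Lyapunov function $V\ge 1$ satisfying a geometric drift condition $P_{t_0}V\le \kappa V+K$ with $\kappa<1$, together with a uniform minorization $P_{t_0}(x,\cdot)\ge\varepsilon\nu(\cdot)$ on some sublevel set $\{V\le R\}$ with $R$ large enough. Theorem~\ref{t:Sol} already supplies the two structural inputs I will need, namely irreducibility and the Lipschitz (in total variation) strong Feller property encoded by \eqref{grad12}; the task is to choose $V$ and to package these into the drift and small-set conditions.

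For the drift, I would take $V(x)=1+|x|^{p}$ for some $p\in(0,\alpha)$, which is the natural choice because the marginals of $Z$ have finite $p$-th moment only for $p<\alpha$. Since by assumption (A1) all eigenvalues of $A$ have negative real part, there exist $M,\omega>0$ with $\|e^{tA}\|\le M e^{-\omega t}$. Using the mild representation
\[
X_t^x=e^{tA}x+\int_0^t e^{(t-s)A}F(X_s^x)\,ds+\int_0^t e^{(t-s)A}\,dZ_s,
\]
the boundedness of $F$, and the estimate $\sup_{t\ge 0}\mathbb{E}\big|\int_0^t e^{(t-s)A}\,dZ_s\big|^{p}<\infty$ (which follows from the self-similarity of $Z$ together with the exponential decay of the semigroup), I would obtain
\[
\mathbb{E}|X_t^x|^{p}\le C_1 e^{-\omega p t}|x|^{p}+C_2
\]
for constants depending only on $p,\alpha,A,\|F\|_0$. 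Picking $t_0$ large delivers $P_{t_0}V(x)\le\kappa V(x)+K$ with $\kappa<1$.

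For the small-set property of the compact balls $\{V\le R\}$, I would combine irreducibility and \eqref{grad12}. The gradient estimate gives
\[
\|P_{t_0}(x,\cdot)-P_{t_0}(y,\cdot)\|_{\rm TV}\le C(t_0)\,|x-y|,
\]
so $x\mapsto P_{t_0}(x,\cdot)$ is Lipschitz into $\mcl P(H)$ with the total variation metric. Fix a reference point, say $x^{*}=0$, and an open ball $B$; by irreducibility, $P_{t_0}(x^{*},B)>0$. The Lipschitz estimate then yields a neighborhood $U$ of $x^{*}$ on which $P_{t_0}(\cdot,B)$ is bounded below by some $\delta>0$, and the minorization $P_{t_0}(x,\cdot)\ge\varepsilon_0\,\nu_0(\cdot)$ for $x\in U$ (obtained by passing to the positive part of $P_{t_0}(x,\cdot)\wedge P_{t_0}(x^{*},\cdot)$). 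A one-step irreducibility and continuity argument on the compact set $\{V\le R\}$ then guarantees that $P_{t_1}(x,U)\ge\eta>0$ uniformly for $x\in\{V\le R\}$; concatenating the two half-steps via the Markov property produces $P_{t_0+t_1}(x,\cdot)\ge\varepsilon\,\nu_0(\cdot)$ uniformly on $\{V\le R\}$, which is the required minorization.

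The most delicate step will be assembling the uniform minorization on the compact sublevel set from the qualitative irreducibility and the quantitative Lipschitz estimate \eqref{grad12}; this is precisely where compactness of balls in $\mathbb{R}^n$ and the explicit gradient bound interact in a clean way, and this is also why, as the authors remark, the finite-dimensional argument is much easier than its infinite-dimensional counterpart. Once the drift and minorization conditions are verified, Harris' theorem immediately yields the existence of a unique invariant measure $\mu$ and constants $C,c>0$ depending only on $p,\alpha,A,\|F\|_0$ such that
\[
\|P_t^{*}\nu-\mu\|_{\rm TV}\le Ce^{-ct}\Big(1+\int_H |x|^{p}\,\nu(dx)\Big)
\]
for every $\nu\in\mcl P(H)$ with finite $p$-th moment, which is the content of Theorem~\ref{t:ErgFin}.
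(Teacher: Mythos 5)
Your route is the paper's Harris route (Section~\ref{s:Har}, adapted to finite dimensions as indicated in Section~\ref{s6}): Lyapunov function $V(x)=1+|x|^{p}$ with $p\in(0,\alpha)$, drift from the exponential decay of $e^{tA}$, the boundedness of $F$ and the $p$-th moment of the stable convolution, and a small-set condition on compact sublevel sets obtained from irreducibility and the gradient estimate \eqref{grad12} of Theorem~\ref{t:Sol}. The drift half is fine. The gap is in your minorization step: from the TV-Lipschitz bound $\|P_{t_0}(x,\cdot)-P_{t_0}(x^{*},\cdot)\|_{\rm TV}\le C|x-x^{*}|$ together with $P_{t_0}(x^{*},B)>0$ you cannot conclude $P_{t_0}(x,\cdot)\ge\varepsilon_0\,\nu_0(\cdot)$ on a neighborhood $U$ of $x^{*}$ with a \emph{fixed} measure $\nu_0$. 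The measure $P_{t_0}(x,\cdot)\wedge P_{t_0}(x^{*},\cdot)$ you invoke depends on $x$, and pairwise TV-closeness of a family of kernels does not produce a common minorizing component (pseudo-small sets need not be small). For instance, the family $\mu_t$, $t\in[0,1]$, of normalized Lebesgue measures on $[0,1]$ deprived of the sliding interval $[t,t+1/4]$ (mod $1$) is TV-Lipschitz in $t$ and any two members are within TV distance $1/3$, yet no nonzero $\nu_0$ satisfies $\mu_t\ge\varepsilon_0\nu_0$ for all $t$, since $\mu_s([s,s+1/4])=0$ for each $s$. A genuine minorization for the SDE would require pointwise lower bounds on transition densities, which neither \eqref{grad12} nor irreducibility supplies and which the paper never uses.

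The repair is to verify instead the hypothesis actually appearing in the version of Harris' theorem the paper quotes (Theorem~\ref{t:Har}, from Hairer's notes), namely condition (ii): $\|P^{*}_{T_0}\delta_x-P^{*}_{T_0}\delta_y\|_{\rm TV}\le 1-\delta$ whenever $V(x)+V(y)\le R$. This pairwise bound is exactly what your ingredients give, by the two-stage argument of Step~3 in Section~\ref{s:Har}: the sublevel set is a closed ball of $\R^n$, hence compact (so the $H^{\e}$-regularization of the infinite-dimensional proof is unnecessary, as the paper notes), and by irreducibility together with the Feller property the map $z\mapsto P_{T_0}(z,B(r))$ is positive and lower semicontinuous, hence bounded below by some $\delta_0>0$ on that ball; running two independent copies from $x$ and $y$, both lie in $B(r)$ at time $T_0$ with probability at least $\delta_0^{2}$, and on that event \eqref{grad12} bounds the TV distance of the next-step kernels by $C r$, giving $\|P^{*}_{2T_0}\delta_x-P^{*}_{2T_0}\delta_y\|_{\rm TV}\le 1-(1-Cr)\delta_0^{2}<1$ for $r$ small. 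With this substitution your proof coincides with the paper's Harris argument (the paper also gives an alternative proof by Doeblin coupling, which you did not pursue); the remaining bookkeeping, passing from the bound along multiples of $T_0$ to all $t$ and to general initial laws $\nu$ with finite $p$-th moment, is as in Steps~1 and~2 of Section~\ref{s:Har}.
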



One can easily adapt our proof  to show that
the previous theorem is also true when~$(Z_t)$ is Gaussian.

\subsection{Ergodic results in the infinite-dimensional case}
The following theorem
describing the long-time behaviour of~$(X^x_t)$ is the main result
of the infinite-dimensional case.

\begin{thm}\label{t:MaiThm} Under Assumption~\ref{a:A},
the system~\eqref{e:MaiSPDE} is ergodic and exponentially mixing. More
precisely, there exists $\mu \in \mcl P(H)$ so that for any $p \in
(0, \alpha)$ and any measure $\nu \in \mcl P(H)$ with finite $p^{\rm
th}$ moment, we have
\begin{equation} \label{2.5}
\|P^{*}_t \nu- \mu\|_{\rm TV} \leq C e^{-ct}\left(1+\int_H |x|^{p}
\nu(dx)\right),
\end{equation}
where $C=C(p, \alpha, \theta, \beta,\gamma, \e, \|F\|_{0})$ and $c=c(p,
\alpha, \theta, \beta,\gamma, \e, \|F\|_{0})$
 with $\beta= (\beta_k)$, $\gamma = (\gamma_k)$.
\end{thm}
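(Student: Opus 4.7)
My plan is to follow Hairer's suggestion of using Harris' theorem, which requires two ingredients: a Lyapunov drift condition of the form $P_T V(x)\le \gamma V(x) + K$ with $\gamma\in(0,1)$, and a small-set minorization $P_T(x,\cdot)\ge \delta\,\kappa(\cdot)$ valid for all $x$ in the sublevel set $\{V\le R\}$ for some $R>2K/(1-\gamma)$ and some probability measure $\kappa$. Once these are in place, Harris' theorem yields existence and uniqueness of an invariant measure $\mu$ together with an exponential contraction in the weighted total variation norm $\|\mu_1-\mu_2\|_V:=\sup_{|f|\le 1+V}|\mu_1(f)-\mu_2(f)|$; since $\|\cdot\|_{\rm TV}\le \|\cdot\|_V$, applying the contraction to $\nu$ and $\mu$ and using $\int(1+V)\,d\nu = 1+\int|x|^p\,\nu(dx)$ delivers the bound \eqref{2.5}.

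For the Lyapunov function I would take $V(x)=|x|^p$ with $p\in(0,\alpha)$, which is the natural choice since the $\alpha$-stable noise has only moments of order strictly less than $\alpha$. Starting from the mild formula \eqref{e:MilSol} and using the decay $|e^{AT}x|\le e^{-\gamma_1 T}|x|$ from Assumption~\ref{a:A}(A1), the boundedness of $F$ from Assumption~\ref{a:A}(A3), and the uniform-in-$T$ moment bounds for the stochastic convolution $\int_0^T e^{A(T-s)}\,dZ_s$ available in Section~\ref{s3} under Assumption~\ref{a:A}(A2), I obtain
\[
\E|X_T^x|^p \le c_p\,e^{-p\gamma_1 T}|x|^p + C_p\quad\text{for all }T\ge 0.
\]
Choosing $T$ large enough to make $c_p e^{-p\gamma_1 T}<1$ gives the drift condition.

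The main obstacle is the minorization, because the sublevel set $B_R\subset H$ is not compact in infinite dimensions. My plan is a two-step reduction. First, using that $e^{AT}$ is a compact operator on $H$ (so $e^{AT}(B_R)$ is precompact), that the boundedness of $F$ combined with the smoothing of the heat semigroup puts $\int_0^T e^{A(T-s)}F(X_s^x)\,ds$ in a fixed compact subset of $H$, and that Assumption~\ref{a:A}(A2) with $\e>0$ implies tightness in $H$ of the law of the stochastic convolution (as explained in Remark~\ref{compare} and Section~\ref{s3}), I show that there exists a compact set $K\subset H$ with $P_T(x,K)\ge\frac{1}{2}$ uniformly for $x\in B_R$. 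Second, on $K$ I combine the irreducibility of Theorem~\ref{t:SolEU} with the uniform strong Feller bound \eqref{grad1}, which implies that $z\mapsto P_{T'}(z,\cdot)$ is Lipschitz in total variation, to extract a common minorant $P_{T'}(z,\cdot)\ge \delta_1\,\kappa_1(\cdot)$ for every $z\in K$; this is the standard Doob-type output for strong Feller, irreducible semigroups once uniformity over the compact $K$ is available. The Markov property then yields $P_{T+T'}(x,\cdot)\ge \frac{\delta_1}{2}\kappa_1(\cdot)$ for all $x\in B_R$, which is the required small-set condition.

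The hardest parts will be the quantitative compact-concentration estimate in step one (extracting a modulus uniform in $x\in B_R$ rather than a bare tightness statement), and choosing the parameters $T$ and $T'$ consistently with the Lyapunov constants $\gamma,K$ so that the sublevel radius $R$ on which the minorization must hold indeed exceeds $2K/(1-\gamma)$; this may require iterating the Lyapunov step a few times to shrink $\gamma$ before Harris' theorem can be applied.
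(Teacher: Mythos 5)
Your overall strategy (Harris' theorem with $V(x)=|x|^p$, $p\in(0,\alpha)$, the Lyapunov drift from the mild formula, and a reduction of the non-compact sublevel set to a compact set via the $H^\e$-smoothing coming from (A2) with $\e>0$) is the same as the paper's, and those parts are sound: the drift bound is exactly Lemma~\ref{l:XtHe} with $\e=0$, and the "land in a compact set with probability $\ge 1/2$" step is exactly what Lemma~\ref{vai} plus Chebyshev and the compactness of $B_\e(R_1)$ in $H$ give.

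The genuine gap is your second step: extracting a \emph{common} minorant $P_{T'}(z,\cdot)\ge\delta_1\,\kappa_1(\cdot)$ for all $z$ in the compact set. This does not follow from irreducibility together with the uniform strong Feller bound~\eqref{grad1}. The gradient estimate gives pairwise total-variation Lipschitz continuity, $\|P_{T'}(z,\cdot)-P_{T'}(z',\cdot)\|_{\rm TV}\le C|z-z'|$, and irreducibility plus Feller continuity give $\inf_{z\in K}P_{T'}(z,B(r))>0$; but pairwise TV-closeness over a compact parameter set does not produce a uniform minorizing measure — the infimum $\bigwedge_{z\in K}P_{T'}(z,\cdot)$ can be the zero measure even when all pairwise distances are small (think of translates of a fixed uniform distribution indexed by a compact interval). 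There is no "standard Doob-type" theorem yielding a genuine minorization on compacts from strong Feller plus irreducibility alone; Doob/Khasminskii give mutual equivalence of the kernels, not a uniform lower bound, and in infinite dimensions (no reference measure, no density estimates) a true small-set minorization is precisely what one cannot expect to verify here. The repair is cheap: use the form of Harris' theorem quoted in the paper (Theorem~\ref{t:Har}, following Hairer's notes), whose condition (ii) only requires the pairwise bound $\|P^*_{T_0}\delta_x-P^*_{T_0}\delta_y\|_{\rm TV}\le 1-\delta$ on sublevel sets of $V$. That condition is exactly what your ingredients deliver without any minorant: from $V(x)+V(y)\le R$, both copies enter $B_\e(R_1)$ with probability $>1/2$ (Lemma~\ref{vai}); from there each hits a small ball $B(r)$ with probability $\ge\delta$ uniformly (irreducibility, Lemma~\ref{p:Irr}, plus lower semicontinuity of $z\mapsto P_{T_0}(z,B(r))$ on the compact $B_\e(R_1)$); and once both are in $B(r)$, estimate \eqref{grad1} bounds the TV distance of the next-step laws by $Cr$, giving $\|P^*_{3T_0}\delta_x-P^*_{3T_0}\delta_y\|_{\rm TV}\le 1-(1-Cr)\delta^2<1$ for $r$ small. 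With this substitution (and the routine bookkeeping of passing from times $kT$ to all $t\ge0$ via the drift bound, and noting that the invariant measure has finite $p$-th moment), your argument coincides with the paper's proof.
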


We will apply the above theorem in the following example which was considered in~\cite{PXZ10}. 

\begin{ex} \label{comp}
Consider the following semilinear parabolic SPDE in a bounded domain $D\subset\R^d$ with smooth boundary~$\p D$:
\Be
\label{e:SemLin}
\begin{cases}
dX(t,\xi)=[\Delta X(t,\xi)+F(X(t,\xi))]dt+dZ_t(\xi), \\
X(0,\xi)=x(\xi), \\
X(t,\xi)=0, \ \ \xi \in \p D,
\end{cases}
\Ee where $Z_t$ and $F$ are specified below. The Laplace
operator~$-\Delta$ with the Dirichlet boundary condition has a
discrete spectrum. We denote by~$\{e_k\}$ the set of its normalised
eigenfunctions and by~$\{\gamma_k\}$ the corresponding eigenvalues
written in increasing order and repeated according to multiplicity.
It is well known that $\gamma_k=C_d\,k^{2/d}(1+\e_k)$, where~$C_d$
is a constant depending on~$d$ and~$D$,  and~$\{\e_k\}$ is a
sequence going to zero as $k\to\infty$; see~\cite{agmon1965}.

We study the dynamics defined by~\eqref{e:SemLin} in the Hilbert space $H= L^2(D)$ with the orthonormal basis~$\{e_k\}$. Let us assume that~$Z= (Z_t)$ is  a cylindrical $\alpha$-stable noise written in the form
$$
Z_t=\sum_{k=1}^\infty \beta_k z_k(t)e_k,
$$
where $\{z_k(t)\}_k$ are i.i.d.\ symmetric $\alpha$-stable processes
with $\alpha \in (0,2)$. A straightforward calculation using the
above-mentioned asymptotics of~$\gamma_k$ shows that~(A2) and~(A4)
are satisfied simultaneously if and only if $2\alpha(\theta-\e)>d$.
Thus, if $d\le 3$, one can choose~$\alpha$, $\theta$, and~$\e$ for
which Assumption~\ref{a:A} holds, and we get the property of
exponential mixing in the total variation  norm for the dynamics
of~\eqref{e:SemLin}. This improves earlier results established in
Theorems 2.5 and 2.6 of~\cite{PXZ10} according to which strong
mixing holds under essentially the same hypotheses and exponential
mixing is true in the Kantorovich--Wasserstein metric if, in
addition, the norm~$\|F\|_0$ is sufficiently small.
\end{ex}

\subsection{Two approaches to exponential ergodicity} \label{s2.3} We shall prove the
exponential ergodicity results by two approaches. The
first one is by applying classical Harris'
 theorem and the other is by coupling argument.\vskip1mm

 We shall use the following Harris' theorem. For a
surprisingly short and nice proof, we refer to Hairer's lecture notes
\cite{Hai09}.
\begin{thm}[Harris] \label{t:Har}
\label{t:HaThm}
 Let $P_t$ be a Markov semigroup in the Polish space
$X$ such that there exists $T_0>0$ and $V : X \to \R_+ $ which
satisfies the following properties:
\begin{itemize}
\item[(i)]
there exists $\gamma <1$ and $K >0$ such that
 $P_{T_0} V(x) \le \gamma V(x) + K $, $x \in X$.
\item[(ii)]
for every $R>0$ there exists $\delta>0$ such that
$$
 \| P^*_{T_0} \delta_x  - P^*_{T_0} \delta_y \|_{TV} \le 1 - \delta,
$$
for all $x,y \in X$ such that $V(x) + V(y) \le R$.
\end{itemize}
Then there exist some $T>0$ and $\beta<1$ such that
$$
 \int_X (1+V(x))|P^*_T \mu  - P^*_T \nu|(dx) \le \beta
 \int_X (1+ V(x)) |\mu - \nu|(dx).
$$
 \end{thm}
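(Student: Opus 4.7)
The strategy is to construct a weighted total-variation norm under which $P_{T_0}^*$ is a strict contraction on signed measures of zero total mass, and then transfer the conclusion to the $(1+V)$-weighted norm of the statement by iteration. For a parameter $a>0$ to be fixed later, introduce
\[
\|\eta\|_a := \int_X (1 + a V(x))\,|\eta|(dx),
\]
which is equivalent to $\|\cdot\|_1$ via $\min(1,a)\|\eta\|_1 \le \|\eta\|_a \le \max(1,a)\|\eta\|_1$. The plan is to exhibit $\lambda \in (0,1)$ with $\|P_{T_0}^*\mu - P_{T_0}^*\nu\|_a \le \lambda\,\|\mu - \nu\|_a$ for all probability measures $\mu,\nu$; iterating $n$ times and choosing $n$ so that $\lambda^n\max(1,a)/\min(1,a) < 1$ then yields the statement with $T = nT_0$ and a contraction factor strictly less than one.

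By the standard duality identity $\|\eta\|_a = \sup\bigl\{\int f\,d\eta : |f(u)-f(v)| \le 2 + aV(u) + aV(v) \text{ for all } u,v\bigr\}$ valid for zero-mass~$\eta$, this weighted contraction is equivalent to the functional bound
\[
|P_{T_0}f(x) - P_{T_0}f(y)| \le \lambda\bigl(2 + a V(x) + a V(y)\bigr), \qquad x,y\in X,
\]
for every measurable $f$ satisfying $|f(u)-f(v)|\le 2 + aV(u)+aV(v)$ everywhere. I would establish this dichotomically using a threshold $R>0$. In the high-$V$ case $V(x)+V(y) \ge R$, coupling $P_{T_0}(x,\cdot)$ and $P_{T_0}(y,\cdot)$ independently and combining the admissibility of $f$ with (i) gives $|P_{T_0}f(x)-P_{T_0}f(y)| \le 2 + a\gamma(V(x)+V(y)) + 2aK$; for $R > 2K/(1-\gamma)$ this is bounded by $\lambda_1(2 + aV(x)+aV(y))$ with some $\lambda_1 < 1$. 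In the low-$V$ case $V(x)+V(y) < R$, condition (ii) supplies $\delta = \delta(R) > 0$ and a common sub-probability $\pi$ of mass $\delta$ sitting beneath both $P_{T_0}(x,\cdot)$ and $P_{T_0}(y,\cdot)$; the associated maximal coupling (supported on the diagonal on the $\pi$-part, independent on the residuals $\tilde\mu_x,\tilde\mu_y$) makes the diagonal contribute nothing to $P_{T_0}f(x)-P_{T_0}f(y)$, so a further application of (i) to control $\tilde\mu_\cdot(V)$ yields a bound of the form $2(1-\delta) + a\gamma(V(x)+V(y)) + 2aK$. Taking $a < \delta/K$ reduces this to $\lambda_2(2 + aV(x)+aV(y))$ with $\lambda_2 = \max(\gamma, 1-\delta+aK) < 1$.

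Setting $\lambda := \max(\lambda_1,\lambda_2)$ and iterating as above completes the proof. The main obstacle is the order of the parameter choices: $R$ must be fixed large first (to make the high-$V$ case contractive), then $\delta = \delta(R)$ is read off from (ii), and only afterwards is the weight $a$ chosen small enough to close the low-$V$ case without spoiling the other one. The conceptual point behind the whole construction is that the single Lyapunov function $V$ simultaneously drives trajectories into the sublevel set where the minorization of (ii) is available \emph{and} provides a weight equivalent to $1+V$, so a single weighted norm captures both the drift mechanism of (i) and the regularisation mechanism of (ii) at once.
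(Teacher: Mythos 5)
Your argument is correct, and it is essentially the standard Hairer--Mattingly proof: the weighted norm $\int(1+aV)\,d|\cdot|$, its dual characterisation over functions with $|f(u)-f(v)|\le 2+aV(u)+aV(v)$, the dichotomy between $V(x)+V(y)\ge R$ (handled by the drift condition (i)) and $V(x)+V(y)<R$ (handled by the minorisation implicit in (ii)), with the parameters fixed in the order $R$, then $\delta(R)$, then $a$, and a final iteration to absorb the equivalence constant between $\|\cdot\|_a$ and the $(1+V)$-weighted norm. The paper itself gives no proof of this theorem---it only states it and refers to Hairer's lecture notes \cite{Hai09}---and your write-up is precisely the argument of that cited source, so there is nothing to add beyond noting that your two case bounds and the choice $a<\delta/K$, $\lambda_2=\max(\gamma,1-\delta+aK)$ check out.
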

The key point for Harris' theorem approach is to find a Lyapunov function $V$ and to check   conditions  (i) and (ii).

\vskip1mm
To sketch  the coupling approach, let us fix a large constant
$T>0$ and consider the restriction of the  Markov process~$(X_t^x)$,
$x \in H$, to the times proportional to~$T$. We denote by~$(Y_k)$
the resulting discrete-time Markov process, by~$\PP_x$ the
corresponding family of probability measures, and by~$P_k(x,\Gamma)$
the transition function.  The dissipativity of~$A$, the boundedness
of~$F$, and the non-degeneracy of~$Z$ imply that~$(Y_k)$ is
irreducible, and the first hitting time of any ball has a finite
exponential
 moment. Furthermore,  as  will follow from
Theorems~\ref{t:Sol} and~\ref{t:SolEU}, if the initial points
$x_1,x_2\in H$ are such that $|x_1-x_2|\le r$, with a sufficiently
small~$r>0$, then
\begin{equation} \label{1.2}
\|P_1(x_1,\cdot)-P_1(x_2,\cdot)\|_{\rm TV}\le \frac12.
\end{equation}

Now let $(Y_k^1,Y_k^2)$ be a homogeneous discrete-time Markov  process in the extended phase space~$H\times H$ such that the following properties hold
for the pair $(Y_1^1,Y_1^2)$ under the law~$\PP_{(x_1,x_2)}$ corresponding to the initial point~$(x_1,x_2)$:
\begin{itemize}
\item[(a)]
The laws of $Y_1^1$ and $Y_1^2$ coincide with $P_1(x_1,\cdot)$ and~$P_1(x_2,\cdot)$, respectively.
\item[(b)]
If $\max(|x_1|,|x_2|)>r $ and $x_1\ne x_2$, then the random variables $Y_1^1$ and~$Y_1^2$ are independent.
\item[(c)]
If $\max(|x_1|,|x_2|)\le r $ and $x_1\ne x_2$, then
\begin{equation*}
\PP_{(x_1,x_2)}\bigl\{Y_1^1\ne Y_1^2\bigr\} =\|P_1(x_1,\cdot)-P_1(x_2,\cdot)\|_{\rm TV}.
\end{equation*}
\item[(d)]
If $x_1=x_2$, then $Y_1^1=Y_1^2$ with probability~$1$.
\end{itemize}
Such a chain can be constructed with the help of maximal coupling of measures; see Section~\ref{s4}. Combining properties (a)--(d) with irreducibility
of~$(Y_k)$ and inequality~\eqref{1.2}, it is possible to prove that the stopping time $\rho=\min\{k\ge0:Y_k^1=Y_k^2\}$ is $\PP_{(x_1,x_2)}$-almost surely
finite and has a finite exponential moment. Moreover, it follows from~(d) that $Y_k^1=Y_k^2$ for $k\ge\rho$. We can thus write
\begin{equation} \label{1.3}
|P_k(x_1,\Gamma)-P_k(x_2,\Gamma)| =|\E_{(x_1,x_2)}(I_\Gamma(Y_k^1)-I_\Gamma(Y_k^2))| \le \PP_{(x_1,x_2)}\{\rho>k\},
\end{equation}
where $\Gamma\subset H$ is an arbitrary Borel subset and~$I_\Gamma$ stands for its indicator function. Since~$\rho$ has a finite exponential moment, the
right-hand side of~\eqref{1.3} can be estimated by $\const e^{-\gamma k}$. Taking the supremum over all Borel subsets~$\Gamma$, we conclude that the
total variation distance between~$P_k(x_1,\Gamma)$ and $P_k(x_2,\Gamma)$ goes to zero exponentially fast for any initial points~$x_1,x_2\in H$. This
implies the required uniqueness and exponential mixing.

\smallskip
In conclusion, let us note that, in the context of randomly forced PDE's, the coupling argument can be modified to cover the case of degenerate noises.
We refer the reader to~\cite{KS-2001,MY-2002,Shi04} for discrete-time random perturbations, to~\cite{Jon02,Hai02,KS-2002,Shi08,Od09} for a white noise,
to~\cite{nersesyan-2008} for a compound Poisson process, and to the book~\cite{KS10} for further references on this subject. We believe that a similar
approach can be developed in the case of dissipative PDE's driven by L\'evy noises.

\section{Proof of structural properties, $\dim H< \infty$} \label{s:F}
In this section, we concentrate on proving Theorem~\ref{t:Sol}, which can be done in the following steps.
\vskip 2mm \textit{Step 1. Existence and uniqueness.}
Since (with $X_t = X_t^x$)
\begin{align} \label{strong}
 X_t = x + \int_0^t A X_s ds + \int_0^t F (X_s)ds + Z_t,
 \end{align}
defining $v(t)= X_t - Z_t$, one can construct a c\`adl\`ag adapted solution, by working $\omega $ by $\omega$ and using a compacteness argument.

\vskip 1mm Uniqueness holds even in the limiting case $\alpha=1$.
 When $A=0$
 it  follows
 directly
 from~\cite{Pr10}. In  the present case of $A \not =0$,
   since the  drift in~\cite{Pr10} was supposed to be
bounded and $x \mapsto Ax$ is an unbounded mapping,
to prove pathwise uniqueness one can proceed
into two different ways. First one can adapt the
computations in~\cite{Pr10} using
 a standard stopping
time argument. To this purpose, we only note that if
 $X_t$ is one solution starting from $x \in \R^n$ then
 formula in \cite[Lemma 4.2]{Pr10} continue to hold if
 $t$ is replaced by
 $t \wedge \tau_R$, $R>0,$ where
$$
 \tau_R = \inf\{t \ge 0; |X_t |\le R \}.
$$
Another method consists in introducing the process
$
Y_t=e^{-At} X_t$. Clearly $Y_t$ satisfies the following equation
\Be \label{e:YEqn} d Y_t=e^{-At} F(e^{At}Y_t)+e^{-At} dZ_t. \Ee According to~\cite{Pr10} with small
modifications (due to the fact that now the drift is bounded but also time-dependent), \eqref{e:YEqn} has a unique strong solution such that \Bes Y_t=x+\int_0^t e^{-As} F(e^{As}Y_s)ds+ \int_0^t e^{-As}Z_s, \Ees and this is
equivalent to~\eqref{strong}.


\noindent \textit{Step 2. Markov property.}  This follows from the uniqueness by standard considerations.

\vskip 1mm \noindent \textit {Step 3. Uniform strong Feller  estimate (\ref{grad1})}.

In order to adapt the method used in the proof of \cite[Theorem 5.7]{PZ09}, we need gradient estimates like
\begin{align}\label{gr}
 \| DR_t f\|_0 \le \frac{c}{t^{1/\alpha}} \| f\|_0, \;\; t \in (0,1], \; f \in B_b(H),
\end{align}
for the OU semigroup $R_t$ corresponding to $F=0$
 in~\eqref{strong}.

\begin{rem}
Some related estimates were obtained in a  recent paper~\cite{W10}
which however does not cover the present situation.
 We also mention~\cite{takeuchi} which contains a Bismut--Elworthy--Li
 formula for  jump diffusion semigroups (even without a Gaussian part). We cannot
 apply~\cite{takeuchi} since our L\'evy measure $\nu$  in general does not have a $C^1$-density with respect to the Lebesgue measure in $\R^n \setminus \{0\}$.
\end{rem}

The next result seems to be of independent interest.

\begin{theorem} Let $H = \R^n$.
Assume that $Z = (Z_t)$ is an $n$-dimensional symmetric non-degenerate $\alpha$-stable process, $\alpha \in (0,2)$. Consider any real $n \times n$ matrix
$A$. Then  gradient  estimates~\eqref{gr} holds for the
 OU semigroup $R_t$ associated with
$$
dX_t = A X_t dt +  dZ_t,\;\; X_0 =x.
$$
\end{theorem}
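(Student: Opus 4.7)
The plan is to analyse $R_t$ via the explicit representation $X_t^x=e^{At}x+Y_t$ with $Y_t:=\int_0^t e^{A(t-s)}dZ_s$; then $Y_t$ is symmetric infinitely divisible with characteristic exponent
\[
\Psi_t(u)=\int_0^t\psi(e^{A^{*}s}u)\,ds,\qquad u\in\R^n,
\]
where $\psi$ is the exponent of $Z$. Showing first that $Y_t$ admits a smooth density $p_t$, one may write $R_tf(x)=\int_{\R^n}f(z)\,p_t(z-e^{At}x)\,dz$, and differentiating under the integral yields
\[
|DR_tf(x)|\le \|e^{At}\|\,\|f\|_0\int_{\R^n}|Dp_t(y)|\,dy.
\]
Since $\|e^{At}\|$ is bounded on $t\in(0,1]$, the estimate \eqref{gr} reduces to
\[
\int_{\R^n}|Dp_t(y)|\,dy\le C\,t^{-1/\alpha},\qquad t\in(0,1]. \tag{$\ast$}
\]

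The non-degeneracy hypothesis $\psi(u)\ge C_\alpha|u|^\alpha$ gives the key lower bound on $\Psi_t$: since $A$ is fixed, there exists $c_0>0$ with $|e^{A^{*}s}u|\ge c_0|u|$ for $s\in[0,1]$, so
\[
\Psi_t(u)\ge C_\alpha c_0^{\alpha}\,t\,|u|^\alpha,\qquad t\in(0,1],\;u\in\R^n.
\]
This forces $|\widehat{p_t}(u)|=e^{-\Psi_t(u)}\le e^{-\kappa t|u|^\alpha}$, so $p_t$ is Schwartz and all its derivatives decay rapidly.

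To prove $(\ast)$, I would rescale $\tilde p_t(y):=t^{n/\alpha}p_t(t^{1/\alpha}y)$; a change of variable gives
\[
\int_{\R^n}|Dp_t(z)|\,dz=t^{-1/\alpha}\int_{\R^n}|D\tilde p_t(y)|\,dy,
\]
and the Fourier transform of $\tilde p_t$ equals $e^{-\tilde\Psi_t(u)}$ with
\[
\tilde\Psi_t(u)=t^{-1}\int_0^t\psi(e^{A^{*}s}u)\,ds,
\]
where I used the homogeneity $\psi(\lambda u)=\lambda^\alpha\psi(u)$ for $\lambda>0$. Both the two-sided estimate $\kappa|u|^\alpha\le\tilde\Psi_t(u)\le\kappa'|u|^\alpha$ and the analogous estimates on derivatives of $\tilde\Psi_t$ hold uniformly in $t\in(0,1]$. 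Fourier inversion combined with repeated integration by parts in $u$ then places $\tilde p_t$ in a bounded subset of a suitable weighted $L^1$ space uniformly in $t$; in particular $\int|D\tilde p_t|\le C$, giving $(\ast)$.

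The main obstacle is the smoothness of $\tilde\Psi_t$: when the spherical measure $\mu$ of $Z$ is singular, $\psi$ need not be $C^\infty$, and one cannot differentiate $\tilde\Psi_t$ an arbitrary number of times in $u$ directly through $\psi$. One must therefore exploit the averaging effect of the $s$-integration over $[0,t]$ together with the action of the one-parameter family $e^{A^{*}s}$ on $u$, which supplies the missing regularity. The careful tracking of how many derivatives of $\tilde\Psi_t$ one can afford—and hence how fast $D\tilde p_t$ decays in $y$—is where the technical weight of the proof lies.
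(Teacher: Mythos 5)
Your reduction is sound as far as it goes, and it follows the paper's own route: the representation $R_tf(x)=\int f(z)p_t(z-e^{At}x)\,dz$, differentiation under the integral, and the scaling $\tilde p_t(y)=t^{n/\alpha}p_t(t^{1/\alpha}y)$ using $\psi(\lambda u)=\lambda^\alpha\psi(u)$ reduce \eqref{gr} exactly to a bound $\int|D\tilde p_t|\le C$ uniform in $t\in(0,1]$, with the rescaled exponent $\tilde\Psi_t(u)=\frac1t\int_0^t\psi(e^{A^*s}u)\,ds$ and the two-sided bound $\kappa|u|^\alpha\le\tilde\Psi_t(u)\le\kappa'|u|^\alpha$; this is precisely steps (1)--(2) of the paper's proof. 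The gap is in the final step, which you yourself flag but do not resolve: the claim that "the averaging effect of the $s$-integration together with the action of $e^{A^*s}$ supplies the missing regularity" of $\tilde\Psi_t$ is not true in the generality of the theorem. The statement allows \emph{any} matrix $A$, including $A=0$, in which case $e^{A^*s}=I$ and $\tilde\Psi_t=\psi$ identically, with no averaging whatsoever; and for a singular spectral measure $\mu$ (e.g.\ the case $Z_t=\sum_j\beta_jz_j(t)e_j$ emphasized in the paper, where $\psi(u)=c\sum_j\beta_j^\alpha|u_j|^\alpha$) the symbol fails to be twice differentiable across the hyperplanes $u_j=0$, so the repeated integration by parts needed to get $L^1$-decay of order $|w|^{-(n+1)}$ cannot be performed on $e^{-\tilde\Psi_t}$. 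Even for a dissipative $A$, the averaging window $[0,t]$ shrinks as $t\to0^+$ ($e^{A^*s}\approx I$ for $s\le t$), so no smoothing uniform in $t\in(0,1]$ can be extracted this way.

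The idea that closes this gap in the paper is a decomposition of the symbol, not of the time integral: write $\psi=\psi_1+\psi_2$ with $\psi_1(u)=\int_{\{|y|\le1\}}(1-\cos\langle u,y\rangle)\,\nu(dy)$ (small jumps) and $\psi_2=\psi-\psi_1$ (large jumps, a finite L\'evy measure). Then $\psi_1$ is automatically $C^\infty$ with $|D\psi_1(u)|\le c|u|$ and bounded higher derivatives, regardless of how singular $\mu$ is, and since the discarded part contributes at most $2\nu(\{|y|>1\})$ to the exponent, the Gaussian-free lower bound survives: $\exp\{-\frac1t\int_0^t\psi_1(e^{-rA^*}h)\,dr\}\le c_1e^{-c_3|h|^\alpha}$. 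One then integrates by parts only against the $\psi_1$-factor, obtaining a Schwartz function $g_t$ with $\|g_t\|_{L^1}\le C$ uniformly in $t$, while the non-smooth factor $\exp\{-\frac1t\int_0^t\psi_2(e^{-rA^*}h)\,dr\}$ is never differentiated at all: it is recognized as the characteristic function of a probability measure $\mu_t$ (the law of $t^{-1/\alpha}\int_0^te^{-(t-s)A}dZ^2_s$), so the derivative kernel equals $g_t*\mu_t$ and $\|g_t*\mu_t\|_{L^1}\le\|g_t\|_{L^1}$. Without some device of this kind (splitting off the rough but finite-mass part and handling it probabilistically by convolution), your plan of differentiating $\tilde\Psi_t$ "enough times" cannot be carried out, so as written the proof is incomplete at its decisive point.
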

\begin{proof} Let us fix $f \in B_b(H)$ and $t \in (0,T]$. It is
known (see, for instance, \cite{PZBU}) that
$$
R_t f (x) =  \int_{H } f (e^{tA} x + y ) p_t(y)(dy), $$
$$
 p_t(y) = \frac{1}{(2 \pi)^n}  \int_{H} e^{- i \langle y, h\rangle}
\exp { \Big(- \int_0 ^t \psi(  e^{s A^*}  h) ds \Big)} dh,
$$
where $\psi $ is the exponent (or symbol) of the L\'evy process $Z$ (see~\eqref{itol}).
We write
$$
R_t f (x)=\frac{1}{(2 \pi)^n} \int_{H } f (  z )
 \Big ( \int_{H} e^{- i
\lan z, h\ran} e^{  i \lan e^{tA^*}
 h, x\ran} e^{ - \int_0 ^t \psi(
e^{s A^*} h) ds } dh \Big) dz.
$$

\underline{\emph{(1)}}.
Recall the rescaling property
$$\psi (u s) = s^{\alpha} \psi (u), \;\;
 s \ge 0,
 $$ and $u \in
H$. The non-degeneracy assumption~\eqref{nondeg} implies that
there exists the  directional derivative along
 any fixed direction
$l \in H$, $|l|=1$
(cf. Section 3 in~\cite{Pr10}),
$$
 D_l R_tf(x) = \frac{i}{(2 \pi)^n} \int_{H } f (  z )
 \Big ( \int_{H} e^{- i
\lan z, h\ran} e^{  i \lan e^{tA^*}
 h, x\ran}  \,  \lan e^{tA^*}
 h, l\ran \, e^{ - \int_0 ^t \psi(
e^{s A^*} h) ds } dh \Big) dz.
$$
Let $e^{tA^* }h =k$. We have
$$
D_l R_tf(x) = \frac{i e^{- t \; tr(A)}}{(2 \pi)^n} \int_{H } f (  z
)
 \Big ( \int_{H} e^{- i
\lan z, e^{- tA^* }k\ran} e^{  i \lan k
 , x\ran}  \,  \lan k, l\ran \,
 e^{ - \int_0 ^t \psi( e^{(s-t) A^*} k) ds } dk \Big) dz
$$
$$
= \frac{i }{(2 \pi)^n} \int_{H } f (  e^{tA} \xi)
 \Big ( \int_{H} e^{- i
\lan \xi, k\ran} e^{  i \lan k
 , x\ran}  \,  \lan k, l\ran \,
 e^{ - \int_0 ^t \psi( e^{-r A^*} k) dr } dk \Big) d \xi
$$
$$
= \frac{i }{(2 \pi)^n} \int_{H } f (  e^{tA} \xi)
 \Big ( \int_{H} e^{  i \lan k
 , (x- \xi)\ran}  \,  \lan k, l\ran \,
 e^{ - \int_0 ^t \psi( e^{-r A^*} k) dr } dk \Big) d \xi.
$$
Let us introduce
$$
 \phi_t (v) = \frac{1 }{(2 \pi)^n}
 \int_{H} e^{  i \lan k
 , v \ran}  \,  \lan k, l\ran \,
 e^{ - \int_0 ^t \psi( e^{-r A^*} k) dr } dk.
$$
It is clear that we get
$$
 \| D_l R_t f  \|_0 \le \frac{C_1}{t^{1/\alpha }} \| f\|_0, \;\; t \in (0,1].
$$
(and so~\eqref{gr}) if we are able to prove that
\begin{align} \label{l1}
 \| \phi_t  \|_{L^1(H)} \le \frac{C_1}{t^{1/\alpha}},
  \;\; t \in (0,1],
\end{align}
where $L^1(H)= L^1(\R^n)$ with respect to the Lebesgue measure.

\underline{\emph{(2)}}.
Let us check~\eqref{l1}. Using the rescaling property, we have
$$
\phi_t (v)  = \frac{1 }{(2 \pi)^n}
 \int_{H} e^{  i \lan k
 , v \ran}  \,  \lan k, l\ran \,
 \exp \left\{ - \frac{1}{t} \int_0 ^t \psi( e^{-r A^*} t^{1/\alpha} k) dr   \right\} dk
$$
$$
= \frac{1 }{(2 \pi)^n  \, t^{n/\alpha}}
 \int_{H} \exp \left\{   i \lan \frac{h}{ t^{1/\alpha}}
 , v \ran \right\}  \,  \lan \frac{h}{ t^{1/\alpha}}, l\ran \,
 \exp \left\{  - \frac{1}{t} \int_0 ^t \psi( e^{-r A^*} h) dr   \right\} dh
$$
$$
= \frac{1}{ t^{1/\alpha}} \, \frac{1 }{(2 \pi)^n  \, t^{n/\alpha}}
 \int_{H} \exp \left\{   i \lan \frac{v}{ t^{1/\alpha}}
 , h \ran \right\}  \,  \lan {h} , l\ran \,
 \exp \left\{  - \frac{1}{t} \int_0 ^t \psi( e^{-r A^*} h) dr   \right\} dh.
$$
Since (with the change of variable: $v/t^{1/\alpha} =w$)
$$
 \int_H |\phi_t (v)| dv = \frac{1}{ t^{1/\alpha}}
  \frac{1 }{(2 \pi)^n}
\int_H \Big | \int_{H} e^{  i \lan w
 , h \ran}  \,  \lan {h} , l\ran \,
 \exp \left\{  - \frac{1}{t} \int_0 ^t \psi( e^{-r A^*} h) dr   \right\} dh
  \Big | dw,
$$
in order to prove~\eqref{l1} we need to show that
\begin{align} \label{l11}
 \| \varphi_t  \|_{L^1(H)} \le {C_1},
  \;\; t \in (0,1],
\end{align}
where
$$
 \varphi_t (w) = \frac{1 }{(2 \pi)^n}
 \int_{H} e^{ - i \lan w
 , h \ran}  \,  \lan {h} , l\ran \,
 \exp \left\{ - \frac{1}{t} \int_0 ^t \psi( e^{-r A^*} h) dr   \right\} dh.
$$

\underline{\emph{(3)}}.
Let us now show~\eqref{l11}. Write $\psi = \psi_1 + \psi_2$,
$$
 \psi_1 (u) =  \int_{ \{ |y| \le 1\} } \big(1-
  \cos \langle u,y \rangle
    \big ) \nu (dy),\;\;\;\;\; \psi_2  =  \psi - \psi_1,
$$
so that
\Bes
\begin{split}
 \varphi_t (w)
  = \frac{1 }{(2 \pi)^n}
 \int_{H} e^{ - i \lan w
 , h \ran}  \,  \lan {h} , l\ran \,
 e^{ - \frac{1}{t} \int_0 ^t \psi_1( e^{-r A^*} h) dr}
 e^{- \frac{1}{t} \int_0 ^t \psi_2( e^{-r A^*} h) dr}
   dh.
\end{split}
\Ees
Now consider the random variable
$$
Y_t = \frac{1}{t^{1/\alpha}} \int_0^t e^{-(t-s) A} dZ^2_s, \;\; t \in (0,1],
$$
where $Z^2 = (Z^2_t)$ is a L\'evy process having exponent
$\psi_2$. It is easy to check that its law $\mu_t$ has characteristic
function $e^{ - \frac{1}{t} \int_0 ^t \psi_2( e^{-r A^*} h) dr   }$, i.e.,$$
\hat \mu_t (h) = \exp \left\{ - \frac{1}{t} \int_0^t \psi_2( e^{-r A^{*}} h) dr   \right\}, \;\; h \in H.
$$
Now suppose that there exists $g_t \in L^1 (H)$,
$t \in (0,1],$ such that
 \begin{equation} \label{gt}
\hat g_t (h)=\langle h, l \rangle
 \exp\left\{-\frac 1t \int_0^t \psi_1( e^{-r A^{*}} h) dr\right\}.
 \end{equation}
Then, by well known properties of the Fourier transfom (see Proposition 2.5 in~\cite{sato}) we would get
$$
 \hat{g_t} \cdot \hat \mu_t =  \widehat {g_t * \mu_t}
$$
and, using the Fourier inversion formula,
$$
\varphi_t (w) = (g_t * \mu_t) (w),
$$
so that  $\| \varphi_t \|_{L^1} \le \| g_t\|_{L^1}$, $t \in (0,1]$. Thus to prove~\eqref{l11} and get the assertion, it remains to show that~\eqref{gt} holds and moreover that
\begin{align} \label{l111}
 \| g_t  \|_{L^1(H)} \le {C_1},
  \;\; t \in (0,1].
\end{align}

\underline{\emph{(4)}.} Now we show~\eqref{gt} and~\eqref{l111}.
Note that
\begin{equation*}
\begin{split}
& \exp \left\{ - \frac{1}{t} \int_0 ^t \psi_1( e^{-r A^*} h) dr  \right\}
= \exp \left\{ - \frac{1}{t} \int_0 ^t
 dr\,  \int_{ \{ |y| \le 1\} } \big( 1 - \cos (\langle
   e^{-r A^*} h, y \rangle)
    \big ) \nu (dy) \right\}\\
& = \exp \left\{ - \frac{1}{t} \int_0 ^t \psi( e^{-r A^*} h) dr \right\}
\;   \exp \left\{   \frac{1}{t} \int_0 ^t
  dr\, \int_{ \{ |y| > 1\} } \big( 1 - \cos (\langle
   e^{-r A^*} h, y \rangle)    \big ) \nu (dy)\right\}\\
& \ \ \ \ \ \ \ \
\le \exp \left\{ 2 \nu (\{ |y| >1 \})\right\} \, \exp \left\{ -  \frac{C_{\alpha}}{t} \int_0 ^t |e^{-r A^*} h|^{\alpha} dr \right\}.
\end{split}
\end{equation*}
Since $|h| \le c_2 |e^{-r A^*} h |$, $h \in H$, $r \in [0,T]$, it follows that
\begin{align} \label{esti}
\exp \left\{ - \frac{1}{t} \int_0 ^t
\psi_1( e^{-r A^*} h) dr    \right\} \le c_1 e^{ - c_3 |h|^{\alpha}},\;\; h \in H, \; t \in (0,1].
\end{align}
 We find easily that  $\psi_1 \in C^{\infty}(H)$
 and so, using also~\eqref{esti}
 we deduce that the mapping $h \mapsto
 \lan {h} , l\ran \,
 e^{ - \frac{1}{t} \int_0 ^t \psi_1( e^{-r A^*} h) dr   }$
  is in the Schwartz space ${\cal S} (H)$, for any $t \in (0,1]$. It follows  that there exists $g_t \in {\cal S} (H)$ such that~\eqref{gt}  holds. By the inversion formula,
  $$
   g_t(w)  = \frac{1 }{(2 \pi)^n}
 \int_{H} e^{ - i \lan w
 , h \ran}  \,  \lan {h} , l\ran \,
 \exp \left\{ - \frac{1}{t} \int_0 ^t \psi_1( e^{-r A^*} h) dr   \right\}
  \,
   dh,\;\; w \in H.
  $$

  Now we show~\eqref{l111}, by proving that
  for any multiindex $\beta = (\beta_1, \ldots , \beta_n)
  \in {\mathbb Z}_+^n$, there exists $c_T$ such that
  (with $w^{\beta}:=w_1^{\beta_1} \cdots w_n^{\beta_n}$)
 \begin{align}  \label{multi}
  \sup_{w \in H}|w^{\beta} g_t(w)| = c_1
   < \infty,\;\; t \in ]0,1]
 \end{align}
 (note that the
  constant $c_1$ is independent of $t$).
 Indeed once~\eqref{multi} is proved then
 $$
  \| g_t \|_{L^1} \le c_1' \int_H \frac{1}{1 + |w|^{2n}} dw = c^{''}_1< \infty.
 $$
 We will check~\eqref{multi} only for $w^{\beta} = w_j$,
 i.e. $\beta = (0, \ldots ,1, \ldots, 0)$ with 1 in
 the $j$-th position. The proof
  in the general case  is similar.

 We have, integrating by parts and using estimate
~\eqref{esti},
 $$
  w_j \,  g_t (w) = \frac{1 }{(2 \pi)^n}
 \int_{H} w_j e^{ - i \lan w
 , h \ran}  \,  \lan {h} , l\ran \,
 \exp \left\{  - \frac{1}{t} \int_0 ^t \psi_1( e^{-r A^*} h) dr   \right\}
  \,
   dh
 $$
  $$
= \frac{i }{(2 \pi)^n}
 \int_{H} \partial_{h_j} \big( e^{ - i \lan w
 , h \ran} \big)  \,  \lan {h} , l\ran \,
\exp \left\{ - \frac{1}{t} \int_0 ^t \psi_1( e^{-r A^*} h) dr   \right\}
  \,
   dh
$$
$$
= - \frac{i }{(2 \pi)^n}
\int_{H} e^{ - i \lan w
 , h \ran}   \, l_j  \,
 \exp \left\{ - \frac{1}{t} \int_0 ^t \psi_1( e^{-r A^*} h) dr   \right\}
  \,
   dh
$$
$$
-\frac{i }{(2 \pi)^n}   \int_{H} e^{ - i \lan w
 , h \ran}  \,  \lan {h} , l\ran \,
 e^{ - \frac{1}{t} \int_0 ^t \psi_1( e^{-r A^*} h) dr   }
  \, \Big( - \frac{1}{t} \int_0 ^t \langle D\psi_1
  ( e^{-r A^*} h),
      e^{-r A^*} e_j \rangle dr
  \Big)
   dh.
$$
Using~\eqref{esti} and the fact the $|D\psi_1(u)| \le c_5 |u|$,
$u \in H$, get easily that
$$\sup_{w \in H}|w_j \, g_t(w)| = c_1
   < \infty,\;\; t \in ]0,1].
$$
The proof is complete.
\end{proof}

 \vskip 1mm \noindent \textit{Step 4. Irreducibility.}
  We cannot
 argue as  in the proof
of \cite[Theorem 5.3]{PZ09} since the drift $F$ is only H\"older
continuous. Note, however, that if we prove that the
Ornstein--Uhlenbeck process $Z_A = (Z_A(t)),$
\begin{align} \label{ztt}
Z_A(t)= \int_0^t e^{A(t-s)} dZ_s
\end{align}
(starting at $x=0$), is  irreducible  then we can obtain
irreducibility for the solution $X^x$ using  the following quite
general result of independent interest.

\begin{proposition} \label{t:IrrFin} Assume that for each $t>0$ the support of  $Z_A(t)$ is the whole space. Then the process $(X_t^x)$ is
irreducible, for any $x \in H$.
\end{proposition}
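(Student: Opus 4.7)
The plan is to isolate a small time interval at the right end of $[0,t]$ and exploit the independent increments of $Z$ together with the smallness of the drift contribution over that interval. Fix $x,y\in H$, $t>0$, and $\epsilon>0$; we aim to show $\PP(|X_t^x - y|<\epsilon) > 0$. From the mild formulation on $[t-\tau,t]$ (for $\tau\in(0,t)$ to be chosen) we have
\[
X_t^x = e^{A\tau}\, X_{t-\tau}^x + J + W,
\]
where
\[
J := \int_{t-\tau}^{t} e^{A(t-s)} F(X_s^x)\,ds, \qquad W := \int_{t-\tau}^{t} e^{A(t-s)}\,dZ_s .
\]
Boundedness of $F$ gives the deterministic bound $|J| \le c_A\|F\|_0\,\tau$ almost surely, with $c_A := \sup_{s\in[0,1]}\|e^{sA}\|$. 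On the other hand, by the independent-increments structure of $Z$, the random variable $W$ is independent of $\mathcal{F}_{t-\tau}$ and has the same law as $Z_A(\tau)$, whose support is the whole space $H$ by hypothesis.

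Next I would choose $\tau$ small enough that $c_A\|F\|_0\,\tau < \epsilon/2$, and set $\tilde y := y - e^{A\tau} X_{t-\tau}^x$, an $\mathcal{F}_{t-\tau}$-measurable random variable. On the event $\{|W-\tilde y|<\epsilon/2\}$, the triangle inequality yields
\[
|X_t^x - y| \le |W - \tilde y| + |J| < \epsilon ,
\]
so
\[
\PP(|X_t^x-y|<\epsilon) \;\ge\; \PP(|W-\tilde y|<\epsilon/2) \;=\; \E\bigl[\varphi(\tilde y)\bigr], \qquad \varphi(z) := \PP(|Z_A(\tau)-z|<\epsilon/2),
\]
where the equality uses the independence of $W$ from $\mathcal{F}_{t-\tau}$. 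Because $Z_A(\tau)$ has full support, $\varphi(z)>0$ for every $z\in H$, and hence $\E[\varphi(\tilde y)]>0$ since the integral of a strictly positive function against a probability measure is strictly positive. Since $x,y,t,\epsilon$ were arbitrary, this gives irreducibility.

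The main step is the decomposition of the noise into the $\mathcal{F}_{t-\tau}$-measurable ``past'' and the independent increment $W$; once this is in place, the only thing to verify is the pathwise bound on the ``drift remainder'' $J$, which follows immediately from the boundedness of $F$. Neither the H\"older regularity of $F$ nor any fine property of $A$ enters beyond this, so the argument applies uniformly in $x\in H$ and does not require irreducibility of the path-valued law of $Z_A$---only the marginal support assumption given in the statement.
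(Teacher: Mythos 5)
Your argument is correct and is essentially the paper's own proof: both isolate a short final time window, bound the drift contribution there deterministically by $\const\cdot\|F\|_0\,\tau$, and use that the noise increment over that window is independent of the past and distributed as $Z_A(\tau)$, whose support is all of $H$. The only (cosmetic) difference is that you condition on $\mathcal{F}_{t-\tau}$ and integrate the strictly positive function $\varphi(\tilde y)$, whereas the paper fixes a point $z$ in the support of $e^{Aa}X_t$ and intersects two independent positive-probability events before shifting the time interval back.
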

\begin{proof} Fix $t>0$,  $a>0$ and let $r>0$ be any positive number. Then
$$
X_{t+a} =e^{Aa}X_t +\int_t^{t+a} e^{A(t+a-s)} F(X_s)ds+\int_t^{t+a} e^{A(t+a-s)} dZ_s.
$$
Let $z$ be any element in the support of the distribution of the random variable $e^{Aa}X_t$. Then, by the very definition, the event
$$
B= \{ |e^{Aa}X_t -z| < {r}/{3}  \}
$$
is of positive probability. Since $||F||_0<\infty$, there exists $c>0$ such that for each $t\geq 0$ and for each positive $b$ with probability $1$
$$
 \Big| \int_t^{t+b} e^{A(t+b-s)} F(X_s)ds \Big| \le c b,
$$
In particular, the above inequality holds for $b=a$. Let us fix $x$ and $y$ in $H$. Then
$$
X_{t+a} -y =(e^{Aa}X_t - z) + \int_t^{t+a} e^{A(t+a-s)} F(X_s)ds+
\Big(\int_t^{t+a} e^{A(t+a-s)} dZ_s - y +z \Big).
$$
Define the event
$$
C = \Big \{  \Big| y-z -\int_t^{t+a} e^{A(t+a-s)} dZ_s
 \Big| < r/3 \Big \},
$$
which, by assumption, is of positive probability. The events $B$ and $C$ are independent and therefore the probability of $B\cap C$ is positive. On
this event, and thus with positive probability,  we have the estimate:
$$
|X_{t+a} -y| \leq \frac {r}{3} + ca + \frac{ r}{3}.
$$
Starting from number $a$ such that $ca <  r/3$ we have with positive probability
$$
|X_{t+a} -y| \leq r .
$$
To finish the proof we should replace $t+a$ and $t$ with $t$ and
$t-a$.
\end{proof}

 By the previous result, we know that the proof of Step 4 is complete once the following theorem has been proved.

 \begin{theorem} \label{irred}
Let $H = \R^n$. Assume that $Z = (Z_t)$ is an $n$-dimensional symmetric non-degenerate $\alpha$-stable process, $\alpha \in (0,2)$. Consider any real $n \times n$ matrix
$A$. Then  the Ornstein--Uhlenbeck process $X(t)= Z_A(t)$ (given in
\eqref{ztt} and starting at $x=0$)
 is irreducible i.e., for any $t>0$ the support of the distribution of $X(t)$ is $H.$
\end{theorem}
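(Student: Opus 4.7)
The plan is to recognize $Z_A(t)$ as a non-degenerate symmetric $\alpha$-stable random vector on $\mathbb{R}^n$ and then invoke the standard fact that every such vector has a smooth density that is strictly positive everywhere; this forces $\mathrm{supp}\,Z_A(t) = H$. All intermediate steps are direct manipulations of the exponent $\psi$ using the representations~\eqref{itol}--\eqref{spec} and the bound~\eqref{nondeg}.

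First I would compute the characteristic function of $Z_A(t)$. Because the integrand $s\mapsto e^{A(t-s)}$ is deterministic, the standard L\'evy stochastic calculus (obtained by a Riemann-sum approximation and the independent-increments property) yields
\[
\mathbb{E}\bigl[e^{i\langle u,Z_A(t)\rangle}\bigr] \;=\; \exp\Bigl(-\int_0^t \psi\bigl(e^{(t-s)A^{*}}u\bigr)\,ds\Bigr) \;=:\; e^{-\Phi_t(u)}, \qquad u\in H.
\]
From the spectral form $\psi(v) = c_\alpha\int_S |\langle v,\xi\rangle|^\alpha \mu(d\xi)$ we have $\psi(\lambda v) = \lambda^\alpha \psi(v)$ for $\lambda>0$, whence $\Phi_t(\lambda u) = \lambda^\alpha \Phi_t(u)$. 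Thus $Z_A(t)$ is a strictly symmetric $\alpha$-stable random vector in $\mathbb{R}^n$.

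Next I would verify non-degeneracy of this stable vector. Using~\eqref{nondeg} together with the invertibility of $e^{sA^{*}}$ for every real~$s$, we estimate
\[
\Phi_t(u) \;\ge\; C_\alpha \int_0^t \bigl|e^{(t-s)A^{*}}u\bigr|^\alpha\,ds \;\ge\; \Bigl(C_\alpha \int_0^t \|e^{-(t-s)A^{*}}\|^{-\alpha}\,ds\Bigr)|u|^\alpha \;=\; c(t)\,|u|^\alpha,
\]
with $c(t)>0$ for every $t>0$. Equivalently, the spectral measure of $Z_A(t)$ (namely the pushforward of $\mu\otimes ds$ on $S\times[0,t]$ under the map $(\xi,s)\mapsto e^{(t-s)A}\xi/|e^{(t-s)A}\xi|$, weighted by $|e^{(t-s)A}\xi|^\alpha$) is not concentrated on any proper subspace.

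The final step, which is the only point beyond routine computation, is to invoke the classical fact that every non-degenerate symmetric $\alpha$-stable distribution on $\mathbb{R}^n$ with $\alpha\in(0,2)$ admits a $C^\infty$ density that is strictly positive on all of $\mathbb{R}^n$ (see~\cite{sato}). Consequently $\mathrm{supp}\,Z_A(t) = H$, which is the required conclusion. The main obstacle, if one wished to avoid the black-box appeal to stable-density theory, would be to establish positivity of the density directly; this could be done by combining the integrability of the characteristic function (which follows at once from the lower bound $\Phi_t(u)\ge c(t)|u|^\alpha$ via Fourier inversion) with a shift-and-convolution argument using the independence of the increments of $Z$, but this is considerably more laborious than citing the classical result.
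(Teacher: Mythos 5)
Your argument is correct, but it follows a genuinely different route from the paper. You observe that the marginal $Z_A(t)$ is itself a symmetric $\alpha$-stable random vector: its exponent $\Phi_t(u)=\int_0^t\psi(e^{(t-s)A^*}u)\,ds$ inherits the $\alpha$-homogeneity from \eqref{spec}, its spectral measure is the pushforward you describe, and \eqref{nondeg} together with the invertibility of $e^{sA^*}$ gives $\Phi_t(u)\ge c(t)|u|^\alpha$, i.e.\ fullness; you then quote the classical theory of multivariate stable laws to conclude full support. The paper instead never touches densities: it splits the L\'evy measure as $\nu=\nu_1+\nu_2$, with $\nu_1$ a finite measure carried by thin cones around spanning directions $\pm a_1,\dots,\pm a_n$ in the support of $\mu$, so that $Z=Z^1+Z^2$ with $Z^1$ compound Poisson, notes that $\mathrm{supp}(\mu_1)\subset\mathrm{supp}(\mu_1*\mu_2)$ reduces the problem to $X^1(t)=\int_0^te^{A(t-s)}dZ^1_s$, and then exhibits, with positive probability, an explicit configuration of $2n$ jumps close to time $t$ that lands $X^1(t)$ within $r$ of any prescribed target $y$. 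What your approach buys is brevity and a conceptual identification (the OU marginal is again non-degenerate stable, which is of some independent interest); what the paper's approach buys is a completely elementary, self-contained argument that uses only the jump structure and needs neither absolute continuity nor any support/positivity theorem for stable laws. One refinement to your last step: everywhere-positivity of the density of a full multivariate stable law is a stronger statement than you need and is not the easiest thing to pin down in \cite{sato}; it is cleaner to invoke only the support theorem for non-degenerate $\alpha$-stable distributions with symmetric L\'evy measure (Sato, Section~24), which already yields $\mathrm{supp}\,Z_A(t)=\R^n$, with the smooth density (from $\Phi_t(u)\ge c(t)|u|^\alpha$ and Fourier inversion) kept as a separate remark if you want it.
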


\begin{proof}  By the non-degenerate assumption~\eqref{spec} there exists $n$ points
$a_1, \ldots, a_n \in S$ such that $a_k \in supp(\mu)$ for $1 \leq k \leq n$ and $span\{a_1, \ldots, a_n\}=\R^n$.
Since $\mu$ is symmetric, $-a_1, \ldots, -a_n \in supp(\mu)$.
It is clear that for any $\e>0$, $\mu(B_s(\pm a_k,\e))>0$
where $B_s(a_k, \e)=\{y \in S; |y-a_k|<\e\}$.

For each $k$, let us now consider the affines ${\mcl F}_{k,+}:=\{r a_k, r>1\}$ and ${\mcl F}_{k,-}:=\{-r a_k, r>1\}$.
For any point $y_k \in \{r a_k, -\infty<r<\infty\}$, there exist $y_{k,+} \in {\mcl F_{k,+}}$ and $ y_{k,-} \in {\mcl F_{k,-}}$ such
that $y_k=y_{k,+}+y_{k,-}$. Define $\mcl F^{+}_{k,\e}:= \{ (x,r)\, :\, x \in B_s(a_k,\e),\,  r>1 \}$,
$\mcl F^{-}_{k,\e}= \{ (x,r)\, :\, x \in B_s(-a_k,\e),\,  r>1 \}$, 
Take $\e>0$ small enough to make
$\mcl F^{\pm}_{i,\e} \cap \mcl F^{\pm}_{j,\e}=\emptyset$ for $i \neq j$ and
$\mcl F^{+}_{i,\e} \cap \mcl F^{-}_{i,\e}=\emptyset$ for each $i$.

 Decompose $\nu$ as the sum of two measures $\nu_1$,  $\nu_2$ such that
$$
\nu = \nu_1 +\nu_2 ,
$$
and one of the measures, say $\nu_1=\nu 1_{(\cup_{k=1}^n \mcl F^{+}_{k,\e})
 \cup (\cup_{k=1}^n \mcl F^{-}_{k,\e})}$, is finite. We can
assume that the process $Z$ is the sum of two independent
L\'evy processes $Z^1$and $Z^2$, with the L\'evy measures $\nu_1$ and $\nu_2$ respectively.
Note that
$$X^1(t): = \int_0^t e^{A(t-s)} dZ_s^1,\;\; t \ge 0,
$$
is a compound Poisson process. Since
$supp(\mu_1) \subset supp(\mu_1 *\mu_2)$ for any two measures $\mu_1$ and $\mu_2$, it is
 enough to prove the irreducibility of $X^1.$

Let us fix $t>0$, $y\in H$
and $r>0$. It is enough to show that
$$
\PP (|X^1 (t) - y|< r ) >0 .
$$
Let $M$ be a number such that for all $s\in (0,1)$:
$$
|e^{As}z| \leq M |z|,\,\,\,\,|(e^{As}-I)z|\leq M s |z|,\,\,z\in H.
$$
Write $y=\sum_{k=1}^n y_k a_k$ where $y_1, \ldots,y_n\in \R$, for each $k$ we have two points $y_{k,+} \in {\mcl F}_{k,+}$ and $y_{k,-} \in {\mcl F}_{k,-}$
and positive number $\delta <1$ such that:
$$
y_{k,+}+y_{k,-}=y_k a_k,\,\, \;\; \delta M \left(|y_{k,+}|+|y_{k,-}|\right)<{\frac{r}{2n}}.
$$
Choose $\e>0$ sufficiently small, the probability that the process $Z^1$ will perform exactly  $2n$ jumps $\xi_{1,-} \in \mcl F^{-}_{1,\e}, \xi_{1,+} \in \mcl F^{+}_{1,\e}, \ldots, \xi_{n,-} \in \mcl F^{-}_{n,\e},\xi_{n,+} \in \mcl F^{+}_{n,\e}$  before $t$ at moments $\tau_{1,-} <\tau_{1,+}<\tau_{2,-} <\tau_{2,+}<\ldots<\tau_{n,-} <\tau_{n,+} <t$ such
that
$$
\tau_{1,-} >t-\delta,\,\,\,\,\,\, |\xi_{k,-} - y_{k,-}|<{\frac{r}{4nM}}, \ \ldots , \ |\xi_{k,+} - y_{k,+} |<{\frac{r}{4nM}}, \ \ \ k=1,\cdots,n,
$$
is positive. Therefore, at least with the same probability, the following relations hold:
\Bes
\begin{split}
& \ \ \left|\int_0^{t} e^{(t-s) A} dZ^1_s-y\right| \\
&=\left|\sum_{j=1}^{n}e^{A(t-\tau_{j,-})}\xi_{j,-}+e^{A(t-\tau_{j,+})}\xi_{j,+} - y\right|\\
&=\left|\sum_{j=1}^{n}e^{A(t-\tau_{j,-})}(\xi_{j,-}-y_{j,-})+e^{A(t-\tau_{j,+})}(\xi_{j,+} - y_{j,+})\right|\\
& \ \ \ +\left|\sum_{j=1}^{n}(e^{A(t-\tau_{j,-})}-I)y_{j,-}+(e^{A(t-\tau_{j,+})}-I)y_{j,+}\right| \\
& \leq \sum_{j=1}^n M \left(|\xi_{j,-}-y_{j,-}|+|\xi_{j,+} - y_{j,+}|\right)+\sum_{j=1}^n \delta M \left(|y_{j,-}|+|y_{j,+}|\right) <r.
\end{split}
\Ees
This finishes  the proof.
\end{proof}
The proof of Theorem~\ref{t:Sol} is now complete.

\section{Estimates of the solution, $\dim H = \infty$}
\label{s3} This section contains some preparation for the proof of
Theorem~\ref{t:MaiThm}, giving some estimates for the solution
\eqref{e:MilSol}. Recall that the Ornstein--Uhlenbeck process is
defined by \Be \label{e:OUAlp} Z_A(t)=\int_0^t e^{A(t-s)}
dZ_s=\sum_{k \geq 1} Z_{A,k}(t) e_k\Ee where
\begin{\eqn*}
 Z_{A,k}(t)=\int_0^t e^{-\gamma_k(t-s)},
\beta_k dz_k(s).
\end{\eqn*}
For any $\e \geq 0$, define
$$
H^\e=\left\{x=\sum_{k\geq1} x_ke_k\in H: \sum_{k \geq 1} \gamma^{2 \e}_k |x_k|^2<\infty \right\}.
$$
Note that~$H^\e$ coincides with the domain of~${(-A)^{\e}}$ and that $H^0=H$. Denote further by $|\cdot|_\e$ the norm of $H^\e$. For $x \in H^\e$ and
$R>0$, we denote by~$B_\e (x,R)$ the closed ball in~$H^\e$ of radius~$R$ centered  at~$x$. We shall write $B_\e (R):=B_\e(0,R)$ and $B(x,R):=B_0(x,R)$.

\BL \label{ser} The following assertions hold:

(i) $Z_A(t) \in H^{\e} \ a.s.$ for all $t>0$.

(ii) For any $p
\in (0, \alpha)$, we have
\begin{\eqn} \label{e:ZAEst} \E |Z_A(t)|^p_{\e}  \leq C
\left(\sum_{k \geq 1} |\beta_k|^\alpha \
 \frac{1-e^{-\alpha \gamma_k t}}{\alpha \gamma^{1-\alpha \e}_k}\right)^{\frac p
 \alpha},
\end{\eqn}
where $C=C(\alpha, p)>0$.
 \EL
\Bp (i). By (4.7) in~\cite{PZ09} we have $$\E[e^{i \lambda
Z_{A,k}(t)}]= e^{-|\lambda|^\alpha c^\alpha_k(t)},
$$ where $c_k(t)=\beta_k
\left(\frac{1-e^{-\alpha \gamma_k t}}{\alpha
\gamma_k}\right)^{1/\alpha}$. Hence, $Z_{A,k}(t)$ has the same
distribution as $c_k(t) \xi_k$ for all $k \geq 1$ where
$\{\xi_k\}_{k \geq 1}$ are i.i.d. with $\E[e^{i \lambda
\xi_1}]=e^{-|\lambda|^\alpha}$. We shall use Proposition 3.3 in
\cite{PZ09}, which claims that
$$(q_k \xi_k)_{k \geq 1} \in l^2 \ \ a.s.  \Longleftrightarrow \sum_{k \geq 1} |q_k|^\alpha<\infty,$$
where $q_k \in \R$ for all $k$. From this it is easy to check that
\begin{\eqn*}
 \sum_{k \geq 1}
\left(\gamma_k\right)^{2\e}\left[c_k(t) \xi_k\right]^{2} <\infty \ \  a.s. \Longleftrightarrow \sum_{k \geq 1}
\frac{\beta^{\alpha}_k}{\gamma^{1-\alpha \e}_k}<\infty.
\end{\eqn*}
Since $Z_A(t)$ has the same distribution as $(c_k(t) \xi_k)_{k \geq
1}$, (i) is clearly true. \vskip 2mm

(ii). We follow the argument in the proof of \cite[Theorem 4.4]{PZ09}. Take a Rademacher
sequence $\{r_k\}_{k \geq 1}$ in a new probability space
$(\Omega^{'},\mcl F^{'},\PP^{'})$, i.e. $\{r_k\}_{k \geq 1}$ are
i.i.d. with $\PP\{r_k=1\}=\PP\{r_k=-1\}=\frac 12$. Recall the following
Khintchine inequality: for any $p>0$, there exists some $C(p)>0$
such that for arbitrary real sequence $\{h_k\}_{k \geq 1}$,
$$\left(\sum_{k \geq 1} h^2_k\right)^{1/2} \leq C(p) \left(\E^{'} \left|\sum_{k \geq 1} r_k h_k\right|^p\right)^{1/p}.$$
By this inequality, one has
\begin{\eqn} \label{e:EZAtp}
\begin{split}
\E|Z_A(t)|^p_{\e}&=\E \left(\sum_{k \geq 1} \gamma^{2\e}_k
|Z_{A,k}(t)|^2\right)^{p/2} \leq C \E \E^{'}\left|\sum_{k \geq 1}
r_k \gamma^\e_kZ_{A,k}(t)\right|^p \\
&=C\E^{'}\E \left|\sum_{k \geq 1} r_k \gamma^\e_k
Z_{A,k}(t)\right|^p,
\end{split}
\end{\eqn}
where $C=C^p(p)$. In view of the equality $|r_k|=1$ and formula~(4.7) of~\cite{PZ09}, for any $\lambda \in \R$ one has
\begin{\eqn*}
\begin{split}
\E\exp \left\{i \lambda \sum_{k \geq 1} r_k \gamma^\e_k
Z_{A,k}(t)\right\}&=\exp\left\{-|\lambda|^\alpha \sum_{k \geq 1}
|\beta_k|^\alpha  \gamma^{\e\alpha}_k \int_0^t e^{-\alpha \gamma_k
(t-s)} ds \right\} \\
&=\exp\left\{-|\lambda|^\alpha \sum_{k \geq 1} \gamma^{\e\alpha}_k
c^\alpha_k(t) \right\}.
\end{split}
\end{\eqn*}
Now we use (3.2) in~\cite{PZ09}: if $X$ is a symmetric random
variable satisfying $\E \left[e^{i\lambda
X}\right]=e^{-\sigma^{\alpha} |\lambda|^\alpha}$ for some $\alpha
\in (0,2)$ and any $\lambda \in \R$, then $\E|X|^p=C(\alpha,p)
\sigma^p$ for all $p \in (0,\alpha)$.
 Since $\sum_{k \geq 1} \gamma^{\e\alpha}_k c^\alpha_k(t)<\infty$, it
is clear to see
\begin{\eqn*}
\E \left|\sum_{k \geq 1} r_k \gamma^\e_k
Z_{A,k}(t)\right|^p=C(\alpha,p)\left(\sum_{k \geq 1}
|\beta_k|^\alpha \
 \frac{1-e^{-\alpha \gamma_k t}}{\alpha \gamma^{1-\alpha \e}_k}\right)^{\frac p
 \alpha},
\end{\eqn*}
from which and~\eqref{e:EZAtp} we get~\eqref{e:ZAEst}.
 \end{proof}

\begin{lem} \label{l:XtHe}
 Let $(X^x_t)$ be the solution to Eq.~\eqref{e:MaiSPDE} with $x
\in H^\e$. For any $p \in (0,\alpha)$, there exist some constants
$C_1=C_1(p)>0$ and $C_2=C_2(p, \e, \gamma, \beta, \|F\|_0)>1$ such
that

\begin{\eqn} \label{e:XtHe}
\E |X^x_t|^p_{\e} \leq C_1 e^{-p \gamma_1 t} |x|^p_{\e}+C_2, \ \ \ \
\forall \ t>0,
\end{\eqn}
\\
where $C_1(p) \leq 1$ for $p \in (0,1]$ and $C_1(p)=3^{p-1}$
otherwise.
\end{lem}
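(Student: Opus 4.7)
The plan is to decompose $X_t^x$ along the mild formulation \eqref{e:MilSol} into three pieces---the deterministic semigroup contribution $e^{At}x$, the nonlinear drift convolution $\Psi(t):=\int_0^t e^{A(t-s)}F(X_s^x)\,ds$, and the stochastic convolution $Z_A(t)$---estimate the $H^\e$-norm of each separately, and then reassemble using either subadditivity or convexity of $r\mapsto r^p$ depending on whether $p\le 1$ or $p>1$.

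For the first piece, the diagonal form of $A$ in Assumption~\ref{a:A}(A1) gives $|e^{At}x|_\e\le e^{-\gamma_1 t}|x|_\e$ directly, since $\gamma_k^{2\e}e^{-2\gamma_k t}|x_k|^2\le e^{-2\gamma_1 t}\gamma_k^{2\e}|x_k|^2$. For the drift piece I would use the standard smoothing bound $\|(-A)^\e e^{As}\|_{H\to H}\le C_\e s^{-\e}e^{-\gamma_1 s/2}$, obtained from $\sup_{\gamma\ge\gamma_1}\gamma^\e e^{-\gamma s/2}\le (2\e/s)^\e e^{-\e}$ together with the remaining factor $e^{-\gamma_1 s/2}$. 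Since $F$ is bounded (Assumption~\ref{a:A}(A3)) and $\e\in(0,1)$, this yields
\begin{equation*}
|\Psi(t)|_\e\le\int_0^t\|(-A)^\e e^{A(t-s)}\|_{\rm op}\,\|F\|_0\,ds\le C_\e\|F\|_0\int_0^\infty u^{-\e}e^{-\gamma_1 u/2}du=:M,
\end{equation*}
a deterministic constant depending only on $\e$, $\gamma_1$ and $\|F\|_0$. The stochastic convolution is controlled directly by Lemma~\ref{ser}(ii): since Assumption~\ref{a:A}(A2) says $\sum_k \beta_k^\alpha/\gamma_k^{1-\alpha\e}<\infty$, the factor $(1-e^{-\alpha\gamma_k t})\le 1$ gives $\E|Z_A(t)|_\e^p\le C'$ uniformly in $t$.

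Combining the three bounds pointwise in $\omega$ yields $|X_t^x|_\e\le e^{-\gamma_1 t}|x|_\e+M+|Z_A(t)|_\e$. For $p\in(0,1]$, the inequality $(a+b+c)^p\le a^p+b^p+c^p$ gives
\begin{equation*}
\E|X_t^x|_\e^p\le e^{-p\gamma_1 t}|x|_\e^p+M^p+\E|Z_A(t)|_\e^p\le e^{-p\gamma_1 t}|x|_\e^p+C_2,
\end{equation*}
so $C_1(p)=1$. For $p\in(1,\alpha)$, the convexity bound $(a+b+c)^p\le 3^{p-1}(a^p+b^p+c^p)$ produces the same structure with $C_1(p)=3^{p-1}$, and $C_2$ absorbs $3^{p-1}M^p+3^{p-1}C'$.

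No step is genuinely hard; the only subtlety worth flagging is that the drift bound must be uniform in $t$, which forces me to keep the exponential factor $e^{-\gamma_1 s/2}$ inside the operator norm rather than using only the short-time smoothing $s^{-\e}$. Without this, the integral $\int_0^t s^{-\e}\,ds=t^{1-\e}/(1-\e)$ would blow up as $t\to\infty$ and spoil the constant-in-time bound needed later for the ergodic estimates. Everything else is routine manipulation of the mild formula and invocation of Lemma~\ref{ser}.
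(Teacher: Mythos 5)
Your proof is correct and follows essentially the same route as the paper: decompose via the mild formula, bound $|e^{At}x|_\e\le e^{-\gamma_1 t}|x|_\e$, control the drift convolution uniformly in $t$ through the smoothing estimate with the retained exponential factor (the paper splits $e^{A(t-s)}=e^{A(t-s)/2}e^{A(t-s)/2}$, which is the same device), invoke Lemma~\ref{ser}(ii) for $Z_A(t)$, and combine with the elementary $p$-th power inequality. The constants $C_1(p)$ you obtain match the statement, so nothing further is needed.
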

\vskip 1mm
\begin{proof}
By~\eqref{e:MilSol}, we have
\begin{\eqn*}
X_t=e^{At} x+\int_0^t e^{A(t-s)} F(X_s)ds+Z_A(t).
\end{\eqn*}
It is easy to see
$$|e^{At} x|_{\e} \leq e^{-\gamma_1 t} |x|_{\e}.$$
 By the easy inequality $|(-A)^{\sigma} e^{At}|_{L(H)} \leq
C(\sigma) t^{-\sigma}$, $t \ge 0$, $\sigma >0$, one has

\begin{\eqn*}
\begin{split}
\left|\int_0^t e^{A(t-s)} F(X_s)ds \right|_{\e} & \leq \int_0^t
|{(-A)^{\e}} e^{A(t-s)/2}|_{L(H)} |e^{A(t-s)/2}
F(X_s)| ds \\
& \leq C(\e) \int_0^t (t-s)^{-\e} e^{-\gamma_1 (t-s)/2} ds
\|F\|_{0} \\
& \leq C(\e, \gamma_1) \|F\|_{0}.
\end{split}
\end{\eqn*}
\\
 for all $t>0, x \in H$ and $\omega \in \Omega$. Furthermore, from
\eqref{e:ZAEst},
\begin{\eqn*}  \E |Z_A(t)|^p_{\e}  \leq C(p,\alpha,
\beta, \gamma, \e), \ \ \ \ \forall \  p \in (0, \alpha).
\end{\eqn*}
 Now we use the
following trivial inequality: for any $a,b,c \geq 0$,
\begin{\eqn*}
\begin{split}
&(a+b+c)^p \leq \left(a^p+b^p+c^p\right),  \ \ \ p \leq 1; \\
&(a+b+c)^p \leq 3^{p-1}\left(a^p+b^p+c^p\right),  \ \ \ p>1.
\end{split}
\end{\eqn*}
 Combining the above
three estimates and the inequality, we can easily see that
\eqref{e:XtHe} is true.
\end{proof}

\begin{lem} \label{vai}
Let $(X^x_t)$ be the solution to Eq.~\eqref{e:MaiSPDE}. For any $p
\in (0, \alpha)$, we have

\begin{\eqn} \label{e:XtHe1}
\E |X^x_t|^p_{\e} \leq C\left(t^{-\e p}|x|^p+t^{p- \e
p}\|F\|^p_{0}+1\right)
\end{\eqn}
\\
for all $t>0$, where $C=C(p,\alpha, \beta, \gamma,\e)$.
\end{lem}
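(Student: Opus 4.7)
The plan is to estimate each of the three terms in the mild solution formula
\[
X^x_t=e^{At}x+\int_0^t e^{A(t-s)}F(X^x_s)\,ds+Z_A(t)
\]
in the $H^\e$-norm, then combine them using the elementary inequality $(a+b+c)^p\le C_p(a^p+b^p+c^p)$ and take expectations. This is parallel to the argument for Lemma~\ref{l:XtHe}, except that $x$ is only assumed to lie in $H$, so the semigroup bound for the initial condition must be taken in smoothing form rather than in contraction form.

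For the first term I would use the standard spectral estimate $\|(-A)^\e e^{At}\|_{L(H)}\le C(\e)\,t^{-\e}$ (valid because the operator is diagonal with eigenvalues $\gamma_k^\e e^{-\gamma_k t}\le C(\e)t^{-\e}$), which gives
\[
|e^{At}x|_\e\le C(\e)\,t^{-\e}\,|x|,\qquad t>0.
\]
For the drift integral, exactly as in the proof of Lemma~\ref{l:XtHe}, I would split the semigroup as $(-A)^\e e^{A(t-s)}=(-A)^\e e^{A(t-s)/2}\cdot e^{A(t-s)/2}$, use the same smoothing bound and the boundedness of $F$ to obtain
\[
\Bigl|\int_0^t e^{A(t-s)}F(X^x_s)\,ds\Bigr|_\e
\le C(\e)\|F\|_0\int_0^t(t-s)^{-\e}\,ds
\le C(\e)\,t^{1-\e}\|F\|_0.
\]
For the stochastic convolution, Lemma~\ref{ser}(ii) together with the uniform bound $\sum_k\beta_k^\alpha/\gamma_k^{1-\alpha\e}<\infty$ from assumption (A2) yields $\E|Z_A(t)|^p_\e\le C(p,\alpha,\beta,\gamma,\e)$ uniformly in $t>0$.

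Combining these three bounds, raising to the $p$-th power with the elementary inequality, and taking expectation gives
\[
\E|X^x_t|^p_\e \le C\bigl(t^{-\e p}|x|^p+t^{p-\e p}\|F\|_0^p+\E|Z_A(t)|_\e^p\bigr),
\]
which is exactly \eqref{e:XtHe1} since the last term is bounded by a constant depending only on $p,\alpha,\beta,\gamma,\e$.

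I do not anticipate a serious obstacle: the estimate is essentially a repetition of the calculation in Lemma~\ref{l:XtHe}, and the only point that needs a bit of care is realizing that, because $x$ is only in $H$ and not in $H^\e$, the $H^\e$-bound on $e^{At}x$ must come from the smoothing property of the analytic semigroup generated by $A$ and produces the singular factor $t^{-\e}$. This is the source of the $t^{-\e p}|x|^p$ term in \eqref{e:XtHe1}.
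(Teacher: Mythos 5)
Your proof is correct and follows essentially the same route as the paper: the paper likewise bounds $|A^\e e^{At}x|\le C t^{-\e}|x|$, estimates the drift convolution by $\int_0^t(t-s)^{-\e}ds\,\|F\|_0\le C t^{1-\e}\|F\|_0$, uses \eqref{e:ZAEst} for the uniform-in-$t$ bound on $\E|Z_A(t)|^p_\e$, and combines the three terms with the elementary $p$-th power inequality. The only cosmetic difference is that you split the semigroup as $(-A)^\e e^{A(t-s)/2}\cdot e^{A(t-s)/2}$, whereas the paper applies the smoothing bound to $(-A)^\e e^{A(t-s)}$ directly; this changes nothing.
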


\begin{proof}
By~\eqref{e:MilSol} and~\eqref{e:ZAEst}, we have

\begin{\eqn*}
\begin{split}
\E|X^x_t|^p_{\e} & \leq C_1\left[|A^\e e^{At}x|^p+\E \left(\int_0^t
|A^\e e^{A(t-s)}|_{L(H)}|F(X^x_s)| ds\right)^p+\E |Z_A(t)|^p_{\e}\right]\\
& \leq C_2 \left[t^{-\e p} |x|^p+\left(\int_0^t
(t-s)^{-\e} ds\right)^p \|F\|^p_{0}+1\right] \\
& \leq C_3 \left(t^{-\e p}|x|^p+t^{p- \e p}\|F\|^p_{0}+1\right),
\end{split}
\end{\eqn*}
\\
where $C_1=C_1(p)$ and $C_i=C_i(p,\alpha, \beta, \gamma,\e)$
($i=2,3$).
\end{proof}

\section{Proof of  Theorem~\ref{t:MaiThm} by Harris' approach,
$\dim H =\infty$} \label{s:Har}

Let us split the proof into the following three steps.

\medskip
{\it Step~1}. The existence of an invariant measure was established
in~\cite{PXZ10}.
 Let us prove that any invariant measure~$\mu$ has
finite $p^{\text{th}}$ moment ($p < \alpha$):
\begin{\eqn} \label{e:MuExp} \mmmm_p(\mu):=\int_H |x|^p \mu(dx)<\infty\quad\mbox{for any $p \in (0,\alpha)$}.
\end{\eqn}
Indeed, by~\eqref{e:MilSol} and the trivial inequality
$$
(a+b) \wedge c \leq a \wedge c+b \wedge c, \quad a, b, c \in \R^{+},
$$
for all $t>0$ and $n \in \N$, we have
\begin{\eqn*}
|X^x_t|^p \wedge n \leq  \left[\big(C_p e^{-p\gamma_1 t}
 |x|^p\big) \wedge n+C_p\left|\int_0^t e^{A(t-s)} F(X_s)ds\right|^p+C_p|Z_A(t)|^p \right].
\end{\eqn*}
Using a similar calculation as in Lemma~\ref{l:XtHe}, we obtain
\begin{\eqn*}
\E \big(|X^x_t|^p \wedge n\big) \leq  \big(C_pe^{-p\gamma_1 t} |x|^p \big)\wedge n+C,
\end{\eqn*}
where $C=C(\alpha, \beta, \gamma, p, \|F\|_{0})$.
Integrating this inequality against~$\mu(dx)$, we get
\begin{\eqn*}
\mu(|x|^p \wedge n) \leq  \mu \left[\big(C_pe^{-p\gamma_1 t}|x|^p\big) \wedge n\right]+C.
\end{\eqn*}
Passing to the limit first as $t \rightarrow \infty$ and then as $n \uparrow \infty$, we complete the proof of~\eqref{e:MuExp}.

\medskip

{\it Step~2}.
To prove the uniqueness of an invariant measure and inequality~\eqref{2.5}, it suffices to show that
\begin{equation} \label{5.2}
\|P_{kT}(x_1,\cdot)-P_{kT}(x_2,\cdot)\|_{\rm TV}
\le C\,(1+|x_1|^p+|x_2|^p)e^{-ckT},
\quad x_1,x_2\in H,
\end{equation}
where $C$ and~$c$ are positive constants not depending on $x_1$,
$x_2$, and~$k$. Indeed, if~\eqref{5.2} is established, then for any
measures $\nu_1,\nu_2\in {\mathcal P}(H)$ with finite
$p^{\text{th}}$ moment we derive
\begin{equation} \label{5.3}
\|P_{kT}^*\nu_1-P_{kT}^*\nu_2\|_{\rm TV}
\le C\,\bigl(1+\mmmm_p(\nu_1)+\mmmm_p(\nu_2)\bigr)e^{-c kT},
 \quad k\in \N.
\end{equation}
This implies, in particular, that an invariant measure is unique. Moreover, writing any $t\ge0$ in the form $t=kT+s$ with $0\le s<T$ and using inequalities~\eqref{5.3} and~\eqref{e:XtHe}, we obtain
\begin{align*}
\|P_{t}^*\nu_1-P_{t}^*\nu_2\|_{\rm TV}
&=\|P_{kT}^*(P_s^*\nu_1)-P_{kT}^*(P_s^*\nu_2)\|_{\rm TV}\\
&\le C\,\bigl(1+\mmmm_p(P_s^*\nu_1)+\mmmm_p(P_s^*\nu_2)\bigr)e^{-c kT}\\
&\le C_1\bigl(1+\mmmm_p(\nu_1)+
\mmmm_p(\nu_2)\bigr)e^{-c t}.
\end{align*}
This estimate readily implies the required inequality~\eqref{2.5}.

\vskip 1mm

Note that~\eqref{5.2} holds if we are able to  apply   Theorem~\ref{t:HaThm}
to equation~\eqref{e:MaiSPDE} with $V(x)=|x|^p$ and $p \in (0,
\alpha)$. Indeed, once this is done,
 we obtain that there exists $T>0$ such that
  \Bes
\begin{split}
\|P_{kT}(x_1,\cdot)-P_{kT}(x_2,\cdot) \|_{TV} & \leq \int_H (1+V(x))|P^*_{kT} \delta_{x_1}  - P^*_{kT} \delta_{x_2}|(dx)  \\
 & \leq \beta^k \int_H (1+ V(x)) |\delta_{x_1} - \delta_{x_2}|(dx) \\
& \leq 2\beta^k \bigl(1+|x_1|^p+|x_2|^p\bigr),\;\;\; k \ge 1.
\end{split}
\Ees
This immediately implies~\eqref{5.2}.
\medskip

{\it Step~3}.
  It remains  to check the conditions (i) and (ii) in Theorem~\ref{t:Har}. Choosing
$V(x)=|x|^p$ with $p \in (0, \alpha)$ and applying Lemma~\ref{l:XtHe} with
$\e=0$ and $T_0>\frac{\log(1+C_1)}{p \gamma_1}$, one immediately get (i).

\vskip 1mm
To prove (ii), we shall use the following lemma proved in~\cite{PZ09}.

\begin{lem}  [Theorem 5.4, \cite{PZ09}] \label{p:Irr}
Let $(X^x_t)$ be the solution to Eq.~\eqref{e:MaiSPDE}. Then
$(X^x_t)$ is irreducible on $H$, i.e., for any $t>0$ and $B(y,r)$
with arbitrary $y \in H$ and $r>0$, we have
\begin{\eqn}
\PP \left (X^x_t \in B(y,r)\right )>0.
\end{\eqn}
\end{lem}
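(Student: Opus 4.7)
The plan is to mimic the two-step finite-dimensional argument of Section~\ref{s:F}: first establish that the Ornstein--Uhlenbeck process $Z_A(t)$ has full support in $H$ for every $t>0$, and then transfer irreducibility to the nonlinear dynamics exactly as in Proposition~\ref{t:IrrFin}. The transfer step adapts with essentially no change, because it uses only boundedness of $F$, the mild form of the equation, and independence of the noise increment from the past. Concretely, for fixed $x, y \in H$ and $r, t > 0$, one picks $a \in (0,t)$ so small that $a\,C\,\|F\|_0 < r/3$ (where $C$ bounds $\|e^{As}\|_{L(H)}$ for $s \in [0,1]$) and writes
\[
X_t^x = e^{Aa} X_{t-a}^x + \int_{t-a}^t e^{A(t-s)} F(X_s^x)\, ds + \int_{t-a}^t e^{A(t-s)}\, dZ_s.
\]
Conditioning on $\mathcal{F}_{t-a}$, the middle term has norm at most $r/3$ deterministically, and the last term is independent of $\mathcal{F}_{t-a}$ with the same law as $Z_A(a)$; applying the full-support property of $Z_A(a)$ to the target $y - e^{Aa} X_{t-a}^x$ yields $\PP(|X_t^x - y| < r) > 0$.

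The core task is therefore to show that $\operatorname{supp}(\mathrm{Law}(Z_A(t))) = H$. By Lemma~\ref{ser}, $Z_A(t) = \sum_{k \geq 1} Z_{A,k}(t) e_k$ with the coordinates $Z_{A,k}(t)$ independent, and each $Z_{A,k}(t)$ is, in law, a nonzero scalar multiple of a one-dimensional nondegenerate symmetric $\alpha$-stable random variable, hence has full support on $\R$. Given $y = \sum_k y_k e_k \in H$ and $r > 0$, one first picks $N_1$ large enough that $\sum_{k > N_1} y_k^2 < r^2/16$. Since $Z_A(t) \in H$ almost surely by Lemma~\ref{ser}(i), the tail sum $\sum_{k > N_0} Z_{A,k}(t)^2$ tends to $0$ a.s.\ as $N_0 \to \infty$, so one may choose $N_0 \ge N_1$ such that the event $E_{\text{tail}} := \{\sum_{k > N_0} Z_{A,k}(t)^2 < r^2/16\}$ has probability greater than $1/2$. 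This event depends only on the coordinates with index $k > N_0$ and is therefore independent of the head event $E_{\text{head}} := \{|Z_{A,k}(t) - y_k| < r/(2\sqrt{N_0}) \text{ for all } k \le N_0\}$, which has positive probability by the one-dimensional full-support property and independence of the head coordinates. On the intersection $E_{\text{head}} \cap E_{\text{tail}}$, a direct bound gives $|Z_A(t) - y|^2 < r^2$, so this intersection has positive probability.

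The main obstacle is uniform control of the infinite tail of the coordinates; it is handled cleanly once Lemma~\ref{ser}(i) is invoked, turning almost-sure summability of $\sum_k Z_{A,k}(t)^2$ into a positive-probability upper bound on any fixed tail. The remaining ingredients are elementary and parallel the finite-dimensional argument of Theorem~\ref{irred}, but are in fact simpler here because the cylindrical $\alpha$-stable noise is already diagonal in the eigenbasis of $A$, so we do not need the jump-by-jump spanning construction used in the finite-dimensional proof.
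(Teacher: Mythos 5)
Your argument is correct, but it is genuinely different from what the paper does for this statement: here Lemma~\ref{p:Irr} is not proved at all, it is simply quoted from~\cite{PZ09} (Theorem~5.4 there), whereas you give a self-contained proof inside the framework of the present paper. Your route mirrors the paper's finite-dimensional two-step scheme: full support of the Ornstein--Uhlenbeck convolution $Z_A(t)$, followed by the transfer argument of Proposition~\ref{t:IrrFin}, whose proof is indeed dimension-free (it only uses $\|F\|_0<\infty$, the mild formulation, dissipativity of $A$, and independence of the noise increment on $[t-a,t]$ from ${\mathcal F}_{t-a}$); your conditional reformulation of that step is fine once you observe that $w\mapsto \PP\bigl(|Z_A(a)-w|<r/3\bigr)$ is Borel (lower semicontinuous) and strictly positive, so its expectation against the law of $y-e^{Aa}X^x_{t-a}$ is positive --- or you can avoid conditioning altogether by picking a point $z$ in the support of $e^{Aa}X^x_{t-a}$ exactly as in Proposition~\ref{t:IrrFin}. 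For the full-support step, your head/tail splitting is correct and is substantially simpler than the finite-dimensional jump construction of Theorem~\ref{irred}, precisely because the cylindrical noise is diagonal in the eigenbasis: the coordinates $Z_{A,k}(t)$ are independent, each is $c_k(t)\xi_k$ with $c_k(t)>0$ (as in the proof of Lemma~\ref{ser}), hence has full support on $\R$, and Lemma~\ref{ser}(i) gives the almost sure square-summability that makes the tail event $\{\sum_{k>N_0}Z_{A,k}(t)^2<r^2/16\}$ have probability larger than $1/2$ for $N_0$ large, independent of the head event. What your approach buys is twofold: the lemma becomes internal to the paper rather than imported, and the transfer step uses only boundedness of $F$ (not the Lipschitz property required in the argument of~\cite{PZ09}), which is consistent with the paper's own remark that Proposition~\ref{t:IrrFin} is a general result of independent interest.
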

\medskip

Let $x$ and $y$ satisfy $|x|^p+|y|^p \le R$. By Lemma~\ref{vai} we know that, for any fixed $T_0>0$,
$$
\E [ |X_{T_0}^x|^p_{\epsilon} ] + \E [ |X_{T_0}^y|^p_{\epsilon} ] \le C(|x|^p + |y|^p+1) \le C_1.
 $$
It follows that there exists some $R_1>0$ such that
$$\PP\left(|X^{x}_{T_0}|_\e \leq R_1\right)>1/2, \ \ \PP\left(|X^{y}_{T_0}|_\e \leq R_1\right)>1/2.$$

\noindent Since $\gamma_k \rightarrow \infty$,
$B_\e(M)$ is compact in $H$. By Lemma~\ref{p:Irr}, for any $r>0$ we have some $\delta(r)>0$ such
that
\Be \label{e:ComPro}
\inf_{x \in B_\e(R_1)} \PP\left(X^x_{T_0} \in B(r)\right) \ge 2\delta.
\Ee
By Markov property and the above three inequalities,
\Bes
\PP\left(X^x_{2T_0} \in B(r)\right)>\delta, \ \ \PP\left(X^y_{2T_0} \in B(r)\right)>\delta.
\Ees
Without loss of generality, in the next computations we  assume that $X^x_{t}$ and $X^y_{t}$ are independent (this is true
if the driving noises of~$X^x_{t}$ and~$X^y_t$ are independent). By Markov property and Theorem~\ref{t:SolEU},
\Bes
\begin{split}
& \ \ \|P_{3 T_0}^{*} \delta_x-P_{3 T_0}^{*} \delta_y\|_{TV}=\frac 12\sup_{\|\phi\|_0 \leq 1} |\E[P_{T_0} \phi(X^x_{2T_0})-P_{T_0} \phi(X^y_{2T_0})]| \\
&\leq \left[1-\PP \{ X^{x}_{2T_0} \in B(r),  X^{y}_{2T_0} \in B(r)\}\right] \\
& \ \ +\frac 12\E\left\{\sup_{\|\phi\|_0 \leq 1} |P_{T_0} \phi(X^x_{2T_0})-P_{T_0} \phi(X^y_{2T_0})| X^{x}_{2T_0} \in B(r),  X^{y}_{2T_0} \in B(r)\right\} \\
& \leq 1-\PP \{ X^{x}_{2T_0} \in B(r),X^{y}_{2T_0} \in B(r)\}+Cr \PP\{X^x_{2T_0} \in B(r),X^y_{2T_0} \in B(r)\} \\
& \leq 1-(1-Cr)\delta^2.
\end{split}
\Ees
Taking $r>0$ sufficiently small, we complete the proof.

\section{Proof of  Theorem~\ref{t:MaiThm} by coupling, $\dim H= \infty$}
\label{s4}

 In this section, we shall prove Theorem~\ref{t:MaiThm} by the Doeblin coupling argument, which gives much more intuitions for
 understanding the way that the dynamics converges
 to the ergodic measure.

\subsection{Construction of the coupling chain}
\label{s6.1}
 Let us first give some preliminary about maximal coupling.
\begin{defn}
Let $\mu_1$, $\mu_2 \in \mcl P(H)$. A pair of random variables $(\xi_1,\xi_2)$ defined on the same probability space is called a \emph{coupling} for $(\mu_1,\mu_2)$ if $\mcl D(\xi_i)=\mu_i$ for $i=1,2$, where $\mcl D(\cdot)$ denotes the distribution of random variable. A coupling $(\xi_1, \xi_2)$ is said to be \emph{maximal} if
\begin{\eqn} \label{e:InqTV}
\PP\{\xi_1 \neq \xi_2\}=\|\mu_1-\mu_2\|_{\rm TV},
\end{\eqn}
and the random variable~$\xi_1$ and~$\xi_2$ conditioned on the event $N:=\{\xi_1 \neq \xi_2\}$ are independent. The latter
condition means that, for any $A_1, A_2 \in \mcl B(H)$, one has
$$
\PP\bigl(\{\xi_1\in A_1\} \cap \{\xi_2\in A_2\}\,|\, N\bigr)
=\PP\bigl(\xi_1\in A_1\,|\,N\bigr)\,\PP\bigl(\xi_2\in A_2\,|\,N\bigr).
$$
\end{defn}

In what follows, we shall the need the following lemma whose proof can be found in~\cite{thorisson2000, lindvall2002, KS10}.

\begin{lem} \label{l:ExiMaxCou}
For any two measures $\mu_1, \mu_2 \in \mcl P(H)$, there exists a
maximal coupling. Moreover, if~$(\xi_1,\xi_2)$ is a maximal
coupling,  then we have\, \footnote{Inequality~\eqref{e:IndN} is
true for any pair of random variables that are independent
conditioned on the event~$\{\xi_1\ne\xi_2\}$.}
\begin{\eqn} \label{e:IndN}
\PP(\xi_1 \in A, \xi_2 \in A)\geq \PP(\xi_1 \in A) \,\PP(\xi_2 \in
A), \ \ \ \ \forall \  A \in \mcl B(H).
\end{\eqn}
\end{lem}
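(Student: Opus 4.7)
The plan is to follow the classical ``common mass + singular remainder'' construction for the existence of a maximal coupling, and then derive \eqref{e:IndN} from nothing more than the conditional independence of $\xi_1,\xi_2$ on $N:=\{\xi_1\neq\xi_2\}$ and the identity $\xi_1=\xi_2$ on $N^c$. First, I would set $\mu:=\mu_1\wedge\mu_2$ (the common part, obtained from the Hahn decomposition of $\mu_1-\mu_2$), and write $\mu_i=\mu+\nu_i$ with $\nu_1\perp\nu_2$. Putting $p:=\mu(H)$, one has the standard identity $\|\mu_1-\mu_2\|_{\rm TV}=1-p$. If $p=1$ the diagonal coupling $\xi_1=\xi_2\sim\mu_1$ works; otherwise, I would build $(\xi_1,\xi_2)$ on a product space carrying a Bernoulli variable $\chi$ with $\PP(\chi=1)=p$, an independent $\zeta\sim p^{-1}\mu$ (when $p>0$), and independent $\zeta_i\sim(1-p)^{-1}\nu_i$, and then set $(\xi_1,\xi_2)=(\zeta,\zeta)$ on $\{\chi=1\}$ and $(\xi_1,\xi_2)=(\zeta_1,\zeta_2)$ on $\{\chi=0\}$. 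The correct marginals are immediate, and $\nu_1\perp\nu_2$ forces $\PP(\zeta_1=\zeta_2)=0$, so $\PP(\xi_1\neq\xi_2)=1-p=\|\mu_1-\mu_2\|_{\rm TV}$; this proves maximality, and conditional independence on $N=\{\chi=0\}$ is built in.

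For \eqref{e:IndN}, by the footnote it suffices to use only the two structural properties above. Let $q=\PP(N)$, $p=1-q$, $\alpha=\PP(\xi_1\in A\mid N^c)$ (which by $\xi_1=\xi_2$ on $N^c$ equals $\PP(\xi_2\in A\mid N^c)$), and $\beta_i=\PP(\xi_i\in A\mid N)$; all lie in $[0,1]$. Splitting on $N$ vs.\ $N^c$ and using conditional independence to evaluate $\PP(\xi_1\in A,\xi_2\in A\mid N)=\beta_1\beta_2$ while $\PP(\xi_1\in A,\xi_2\in A\mid N^c)=\alpha$, a direct expansion gives
$$
\PP(\xi_1\in A,\xi_2\in A)-\PP(\xi_1\in A)\,\PP(\xi_2\in A)
=p\bigl[\alpha(1-\alpha)+q(\beta_1-\alpha)(\beta_2-\alpha)\bigr].
$$
If $(\beta_1-\alpha)(\beta_2-\alpha)\geq0$ we are done immediately. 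If not, one of the factors is negative and the other positive, say $\beta_1\geq\alpha\geq\beta_2$, and the elementary bounds $\beta_1-\alpha\leq 1-\alpha$ and $\alpha-\beta_2\leq\alpha$ yield
$$
q(\beta_1-\alpha)(\beta_2-\alpha)=-q(\beta_1-\alpha)(\alpha-\beta_2)\geq -q\alpha(1-\alpha)\geq -\alpha(1-\alpha),
$$
so the bracket is nonnegative and \eqref{e:IndN} follows.

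The construction and the identity $\|\mu_1-\mu_2\|_{\rm TV}=1-p$ are entirely classical, and the algebraic identity displayed above is routine. The only step requiring a little care is the sign analysis of the cross term $q(\beta_1-\alpha)(\beta_2-\alpha)$: this term can genuinely be negative, and it is precisely the joint use of conditional independence on $N$, coincidence on $N^c$, and the fact that $\alpha,\beta_1,\beta_2\in[0,1]$ that allows the positive $p\alpha(1-\alpha)$ contribution to absorb it. The rest of the argument is bookkeeping.
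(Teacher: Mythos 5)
Your proof is correct. The paper does not prove Lemma~\ref{l:ExiMaxCou} in the text at all---it simply refers to the references \cite{thorisson2000, lindvall2002, KS10}---and your ``common mass plus singular remainder'' construction is precisely the classical argument found there: with $\mu=\mu_1\wedge\mu_2$, $\mu_i=\mu+\nu_i$, $\nu_1\perp\nu_2$ and $p=\mu(H)$, the marginals, the identity $\PP(\xi_1\ne\xi_2)=1-p=\|\mu_1-\mu_2\|_{\rm TV}$ (consistent with the paper's normalization of the total variation distance) and the conditional independence on $N=\{\chi=0\}$ all check out. Your verification of \eqref{e:IndN} is also correct: the displayed identity $\PP(\xi_1\in A,\xi_2\in A)-\PP(\xi_1\in A)\PP(\xi_2\in A)=p\bigl[\alpha(1-\alpha)+q(\beta_1-\alpha)(\beta_2-\alpha)\bigr]$ expands correctly, and the sign analysis using $\beta_1-\alpha\le 1-\alpha$, $\alpha-\beta_2\le\alpha$, $q\le1$ is sound; moreover, since $N^c=\{\xi_1=\xi_2\}$ by definition, the coincidence on $N^c$ that you use is automatic, so your argument indeed establishes the stronger claim of the paper's footnote that \eqref{e:IndN} holds for any pair that is conditionally independent given $\{\xi_1\ne\xi_2\}$.
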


Now let us construct an auxiliary Markov chain in the extended phase space~$H\times H$. Let $T>0$ be some fixed real number to be chosen later.
For any $x:=(x_1, x_2) \in H \times H$, denote
by $M(x)=(M_1(x),M_2(x))$ the maximal coupling of
$(P_T)^* \delta_{x_1}$ and $(P_T)^* \delta_{x_2}$. Let us define a transition function
$\tilde P_T(x,\cdot)$ on the space $H\times H$ such that
\Bes
\tilde P_T(x; A_1 \times A_2)=\begin{cases}
P_T(x_1, A_1 \cap A_2) \ \ {\rm if}  \ \ x_1=x_2,\\
\mcl D(M_1(x),M_2(x))(A_1 \times A_2) \ \ {\rm if} \ x_1,x_2 \in B(r) \ {\rm with} \ x_1 \neq x_2, \\
P_T(x_1, A_1)P_T(x_2,A_2) \ \ \ {\rm otherwise},
\end{cases}
\Ees
where $A_1, A_2 \in \mcl B(H)$ are arbitrary sets, $P_T(x_i,\cdot)$ is the transition probability of $X^{x_i}_T$ for $i=1,2$, and~$\mcl D(\cdot)$ denotes
the distribution of a random variable.
 For any $A \in \mcl B(H \times H)$, $\tilde P_T(x,A)$ is uniquely defined by a classical approximation procedure. Now the transition function~$\tilde P_T(x,\cdot)$ is well defined.

\subsection{Hitting times ${\tau}^{\e} $ and $\tau$}

We denote by $(X_1(kT),X_2(kT))_{k \in \Z^{+}}$ the Markov chain
whose transition function is equal to~$\tilde P_T(x,\cdot)$; here
$\Z^{+} = \{0,1,2, \ldots \}$. Clearly, for each $i=1,2$, $(X_i(kT))$
is also a Markov chain and has the same distribution as
$(X^{x_i}_{kT})$. We shall write $X(kT)=(X_1(kT),X_2(kT))$ for $k
\in \Z^+$.


\vskip 1mm  For any $r, M>0$, define the hitting  times
\begin{\eqn} \label{e:TauM}
{\tau}^{\e} =\inf\{kT; |X_1(kT)|_\e+|X_2(kT)|_\e \leq M\},
\end{\eqn}
\begin{\eqn} \label{e:TauR}
\tau=\inf\{kT; |X_1(kT)|+|X_2(kT)| \leq r\},
\end{\eqn}
where $\e \in (0,1)$ is the constant in Assumption~\ref{a:A}. Recall that the infimum  over an empty set is equal to~$+\infty$.

\subsubsection{Estimates of the hitting time ${\tau}^{\e} $}

The main result of this subsection is the following theorem, which is in fact a step for estimating~$\tau$.

\begin{thm} \label{t:TauMEst}
For any $p \in (0, \alpha)$ and sufficiently large~$T>0$ there is a constant  $M=M(p,T,\alpha,\beta,\gamma,\e)$ such that,
 for any $x= (x_1, x_2) \in H\times H$,
\begin{\eqn} \label{e:TauMEst}
\E_x \,[e^{\eta {\tau}^{\e} }]\leq C\bg(1+|x_1|^p+|x_2|^p\bg)
\end{\eqn}
where $\eta>0$ is sufficiently small, and $C=C(p, T,\alpha, \beta, \gamma, \e, \|F\|_{0},\eta)$
\end{thm}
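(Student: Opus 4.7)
My plan is to apply a classical Lyapunov/supermartingale argument for exponential hitting times to the discrete-time chain $\{X(kT)\}_{k\ge 0}$, with one twist at the first step since the initial point $x=(x_1,x_2)$ lies in $H\times H$ but not, in general, in $H^\e\times H^\e$. I shall work with the Lyapunov functional
\[
V(y_1,y_2):=1+|y_1|^p_\e+|y_2|^p_\e
\]
and the auxiliary stopping time $\sigma:=\inf\{k\ge 1:V(X(kT))\le R^{\ast}\}$, where $R^{\ast}>0$ is to be fixed. Because the two marginals of the coupled chain coincide in law with the solution of~\eqref{e:MaiSPDE} started from the corresponding coordinate, Lemma~\ref{l:XtHe} applied coordinatewise yields
\[
\E[V(X((k+1)T))\mid\mathcal{F}_{kT}]\le \lambda\, V(X(kT))+K,\qquad k\ge 1,
\]
with $\lambda:=C_1 e^{-p\gamma_1 T}$ and $K:=1-\lambda+2C_2$. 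For $p\in(0,1]$ one has $C_1\le 1$ so any $T>0$ gives $\lambda<1$; for $p>1$ I take $T$ large enough that $\lambda<1$.

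Next, fix $\eta>0$ with $\mu:=\lambda e^{\eta T}<1$ and a constant $D>0$, and choose $R^{\ast}$ so large that $e^{\eta T}(\lambda V+K+D)\le V+D$ on $\{V>R^{\ast}\}$. A direct computation then shows that the process $M_k:=e^{\eta k T}(V(X(kT))+D)$ satisfies $\E[M_{(k+1)\wedge \sigma}\mid\mathcal{F}_{kT}]\le M_{k\wedge\sigma}$ for $k\ge 1$. Applying optional stopping at $n\wedge\sigma$, letting $n\to\infty$ and using Fatou together with $M_\sigma\ge D\,e^{\eta\sigma T}$, one deduces
\[
D\,\E_x[e^{\eta\sigma T}]\le \E_x M_1\le e^{\eta T}\bigl(\E_x V(X(T))+D\bigr).
\]
The first-step expectation $\E_x V(X(T))$ is controlled by Lemma~\ref{vai} applied coordinatewise:
\[
\E_x V(X(T))\le C'\bigl(1+|x_1|^p+|x_2|^p\bigr),
\]
with $C'=C'(p,T,\alpha,\beta,\gamma,\e,\|F\|_0)$. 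Combining the two displays gives $\E_x[e^{\eta \sigma T}]\le C''(1+|x_1|^p+|x_2|^p)$.

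Finally, I match $\sigma T$ with $\tau^{\e}$ by choosing $M$ appropriately: whenever $V(y_1,y_2)\le R^{\ast}$, the elementary bound $a+b\le c_p(a^p+b^p)^{1/p}$ (with $c_p=1$ for $p\le 1$ and $c_p=2^{1-1/p}$ for $p\ge 1$) yields $|y_1|_\e+|y_2|_\e\le c_p(R^{\ast}-1)^{1/p}=:M$. Hence $\{V\le R^{\ast}\}\subset\{|y_1|_\e+|y_2|_\e\le M\}$, which gives $\tau^{\e}\le \sigma T$ almost surely and the claim~\eqref{e:TauMEst}. The only genuine obstacle is the bookkeeping: one must choose $T$ large enough for $\lambda<1$, $\eta$ small enough for $\mu<1$, then $D$ and $R^{\ast}$ compatibly so that the Lyapunov drift inequality produces a supermartingale on $\{V>R^{\ast}\}$, and finally $M$ dictated by $R^{\ast}$. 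No further estimate beyond Lemmas~\ref{l:XtHe} and~\ref{vai} is needed.
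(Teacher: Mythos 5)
Your argument is correct, and it uses the same two ingredients as the paper (the geometric drift in $H^\e$ coming from Lemma~\ref{l:XtHe} applied coordinatewise to the coupled chain, as in Lemma~\ref{l:4.5}, and the one-step smoothing bound $\E_x V(X(T))\le C'(1+|x_1|^p+|x_2|^p)$ from Lemma~\ref{vai}, which handles initial data in $H\times H$ rather than $H^\e\times H^\e$). Where you diverge is in the middle step: the paper first proves the tail estimate \eqref{e:TauPro} of Lemma~\ref{l:TauPro} by tracking the pair $p_k=\PP(B_k)$, $e_k=\E\bigl(|X(kT)|^p_\e 1_{B_k}\bigr)$ through a two-by-two linear recursion (following Debussche), chooses $M$ so that the iteration contracts, and then converts the geometric tail into an exponential moment via a first-step decomposition; you instead run a standard Foster--Lyapunov argument, making $e^{\eta kT}(V(X(kT))+D)$ a supermartingale up to the first entry of $V$ into a sublevel set and applying optional stopping. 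The two mechanisms are equivalent in strength here: your route is somewhat more streamlined and avoids introducing the auxiliary sequences, while the paper's route additionally yields the explicit tail bound \eqref{e:TauPro} (used only inside this proof). Two bookkeeping remarks: the almost sure finiteness of your $\sigma$ should be noted explicitly (it follows from the same supermartingale bound, since $\PP_x\{\sigma>n\}\le D^{-1}e^{-\eta nT}\E_x M_1$), and your $M$ a priori depends on $\eta$ through $R^{\ast}$; this is harmless, since one may fix an admissible $\eta_0$, take the corresponding $M$, and observe that the bound for any smaller $\eta$ follows from $e^{\eta\tau^\e}\le e^{\eta_0\tau^\e}$.
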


To prove Theorem~\ref{t:TauMEst}, we first establish two auxiliary lemmas.

\begin{lem} \label{l:4.5}
For any $p \in (0,\alpha)$, the Markov chain  $(X(kT))$ satisfies the inequality
\begin{\eqn*} \label{e:XkTHe}
\E_x \bg(|X_1(T)|^p_\e+|X_2(T)|^p_\e \bg) \leq C_1 e^{-p
\gamma_1 T} \bg(|x_1|^p_\e+|x_2|^p_\e\bg)+2C_2,
\end{\eqn*}
where $C_1$ and $C_2$ are the same as in Lemma~\ref{l:XtHe}.
\end{lem}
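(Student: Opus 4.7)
The plan is to reduce the two-coordinate estimate to the single-coordinate estimate already established in Lemma~\ref{l:XtHe}, exploiting the fact that the coupling construction in Section~\ref{s6.1} preserves marginals.

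First, I would verify that, under $\PP_x$, the marginal law of $X_i(T)$ coincides with $P_T(x_i,\cdot)$ for $i=1,2$. Looking at the three cases defining $\tilde P_T(x,\cdot)$: (a) when $x_1=x_2$, both coordinates move together according to $P_T(x_1,\cdot)$, so the marginals are trivially correct; (b) when $x_1,x_2\in B(r)$ with $x_1\ne x_2$, the pair is the maximal coupling $(M_1(x),M_2(x))$ of $(P_T)^*\delta_{x_1}$ and $(P_T)^*\delta_{x_2}$, and by definition of a coupling the marginals are exactly $P_T(x_i,\cdot)$; (c) in the remaining case, the two coordinates are taken to be independent with the correct laws $P_T(x_i,\cdot)$. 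In all cases the marginals agree with $P_T(x_i,\cdot)$, so $X_i(T)$ has the same distribution as $X_T^{x_i}$.

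Second, I would apply Lemma~\ref{l:XtHe} (with $t=T$) separately to each marginal. For every $x_i\in H^\e$ and $p\in(0,\alpha)$,
\begin{equation*}
\E_x |X_i(T)|^p_\e = \E |X_T^{x_i}|^p_\e \le C_1 e^{-p\gamma_1 T}|x_i|^p_\e + C_2,\qquad i=1,2,
\end{equation*}
with the same constants $C_1$ and $C_2$ as in Lemma~\ref{l:XtHe}. Summing the two inequalities yields exactly the claimed bound
\begin{equation*}
\E_x\bg(|X_1(T)|^p_\e+|X_2(T)|^p_\e\bg)\le C_1 e^{-p\gamma_1 T}\bg(|x_1|^p_\e+|x_2|^p_\e\bg)+2C_2.
\end{equation*}

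There is essentially no hard part here: the content of the lemma is purely that the Lyapunov-type estimate in $H^\e$ is stable under the coupling construction, which is immediate once the marginal-preservation property of maximal coupling is noted. The only detail to handle with care is case (a) (diagonal initial data), where one must observe that $|X_1(T)|^p_\e+|X_2(T)|^p_\e = 2|X_T^{x_1}|^p_\e$ and $|x_1|^p_\e+|x_2|^p_\e=2|x_1|^p_\e$, so the inequality again reduces to Lemma~\ref{l:XtHe}.
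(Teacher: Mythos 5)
Your proof is correct and follows the same route as the paper: the marginal of $X_i(T)$ under $\PP_x$ is $P_T(x_i,\cdot)$ by the coupling construction, so Lemma~\ref{l:XtHe} applies coordinatewise and the bound follows by summation. Your explicit case-by-case verification of the marginal property is a welcome elaboration of what the paper dispatches with the phrase ``by definition of coupling.''
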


\begin{proof}
By definition of coupling and Lemma~\ref{l:XtHe}, we have

\begin{\eqn*} 
\E_x |X_i(T)|^p_\e=\E |X^{x_i}_T|^p_\e \leq C_1(p)e^{-p \gamma_1 T}
|x_i|^p_\e+C_2
\end{\eqn*}
\\
for $i=1,2$. From
the above inequality, we complete the proof.
\end{proof}

\vskip 1mm

\begin{lem} \label{l:TauPro}
For any $p \in (0, \alpha)$ and sufficiently large~$T>0$, there
exist positive constants $q=q(p,\gamma) \in (0,1)$  and $M=M(p,T,\alpha,
\beta, \gamma, \|F\|_0,\e)$ such that
\begin{\eqn} \label{e:TauPro}
\PP_x({\tau}^{\e} >kT) \leq q^k\left(1+|x_1|^p_\e+|x_2|^p_\e\right) \quad\mbox{for any $x=(x_1,x_2) \in H^\e \times H^\e$}.
\end{\eqn}
\end{lem}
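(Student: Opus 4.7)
The plan is to apply a Foster--Lyapunov super-martingale argument to the coupled chain $(X(kT))_{k\ge0}$, with Lyapunov function $V(x):=|x_1|^p_\e+|x_2|^p_\e$ for $x=(x_1,x_2)\in H^\e\times H^\e$, using Lemma~\ref{l:4.5} as input. Since $(X(kT))$ is Markov under $\tilde P_T$ and its marginals have the same law as $(X^{x_i}_{kT})$, Lemma~\ref{l:4.5} applied conditionally gives
$$\E_x\!\bigl[V(X((k+1)T))\,\big|\,\mcl F_{kT}\bigr]\le \lambda\,V(X(kT))+K,$$
with $\lambda:=C_1 e^{-p\gamma_1 T}$ and $K:=2C_2$. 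First I would fix any target rate $q\in(0,1)$ depending only on $p$ and $\gamma_1$ (say $q=1/2$), then take $T$ large enough that $\lambda\le q/2$ and $M\ge1$ large enough that $K/M\le q/2$; this fixes both the ``sufficiently large $T$'' and the constant $M=M(p,T,\alpha,\beta,\gamma,\|F\|_0,\e)$ appearing in the statement, and ensures $\lambda+K/M\le q$.

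The core step is then to introduce the stopped quantity $Y_k:=V(X(kT))\,\one_{\{\tau^\e>kT\}}$ and show that it behaves like a geometric super-martingale while the chain stays out of the set $\{V\le M\}$. Since $\{\tau^\e>(k+1)T\}\subseteq\{\tau^\e>kT\}$ and $V(X(kT))>M$ on $\{\tau^\e>kT\}$, the Markov property together with the drift inequality yields
$$\E_x\!\bigl[Y_{k+1}\,\big|\,\mcl F_{kT}\bigr]\le \one_{\{\tau^\e>kT\}}\bigl(\lambda V(X(kT))+K\bigr)\le (\lambda+K/M)\,V(X(kT))\,\one_{\{\tau^\e>kT\}}= q\,Y_k,$$
using the bound $K\le(K/M)V(X(kT))$ on the relevant event. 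Taking expectations and iterating, I obtain $\E_x Y_k\le q^k V(x)$.

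To finish, the same elementary observation $\one_{\{\tau^\e>kT\}}\le Y_k/M$ (which again exploits $V(X(kT))>M$ on $\{\tau^\e>kT\}$), combined with $M\ge1$, yields
$$\PP_x(\tau^\e>kT)\le \frac{\E_x Y_k}{M}\le \frac{q^k V(x)}{M}\le q^k\bigl(1+V(x)\bigr),$$
which is exactly~\eqref{e:TauPro}. I do not foresee a serious obstacle: the argument is a standard Foster--Lyapunov/Meyn--Tweedie-type bound, and the one potentially delicate point---transferring the drift inequality of Lemma~\ref{l:4.5} from a single SPDE to the coupled chain---is immediate because $V$ is additive in the two marginals and the coupling transition $\tilde P_T$ preserves the marginal laws, so the conditional expectation $\E_x[V(X(T))\mid\mcl F_{0}]$ splits as the sum of two terms, each of which is controlled by Lemma~\ref{l:4.5}.
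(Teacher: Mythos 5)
Your proof is correct and is essentially the paper's argument: both rest on the one-step drift bound of Lemma~\ref{l:4.5} (applied conditionally via the Markov property of the coupled chain), then choose $T$ large and $M$ large so that the additive constant is absorbed off the set $\{|X_1|_\e+|X_2|_\e\le M\}$; the paper merely organizes the iteration, following Debussche, as a two-component recursion in $e_k=\E_x\big(V(X(kT))\,1_{B_k}\big)$ and $p_k=\PP_x(B_k)$, which after the substitution $p_k\le e_k/M^p$ collapses to exactly your contraction of $Y_k$. One small point to tidy: on $\{\tau^\e>kT\}$ you only know $|X_1(kT)|_\e+|X_2(kT)|_\e>M$, hence $V(X(kT))\ge \min(1,2^{1-p})\,M^p$ rather than $V(X(kT))>M$, so the threshold $M$ should be replaced by $\min(1,2^{1-p})M^p$ in the two places you use it --- a harmless adjustment (the paper's Chebyshev step has the same implicit constant), since this threshold still tends to infinity as $M\to\infty$.
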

\vskip 1mm

\begin{proof}
The proof follows the idea in~\cite{De10}. Let us take $T>0$ so large that the coefficient in front of~$|x|_\e^p$ in inequality~\eqref{e:XtHe} is smaller than~$1$. In this case, setting $\PP=\PP_x$, $\E=\E_x$, and
$$
|x|^p_\e=|x_1|^p_\e+|x_2|^p_\e,$$
we can write
\begin{\eqn} \label{e:Sto1}
\begin{split}
 \E\big[|X(kT+T)|_{\e}^p\ \big |\mcl
F_{kT}\big] \leq q^2\, |X(kT)|_{\e}^p+2C_2
\end{split}
\end{\eqn}
where $q>0$ is defined by the relation $q^2=C_1e^{-p\gamma_1T}<1$. By Chebyshev inequality,

\begin{\eqn} \label{e:Sto2}
\PP\left(|X(kT+T)|_{\e}>M|\mcl F_{kT}\right)
\leq \frac{q^2}{M^p}|X(kT)|^p_{\e}+\frac {2 C_2}{M^p}.
\end{\eqn}
\\
Denote
$$B_k=\{|X(jT)|_{\e}>M; j=0, \ldots, k\}$$
and
$$p_k=\PP(B_{k}), \ \ e_{k}=\E \big(|X(kT)|_{\e}^p
\, 1_{B_{k}}\big),$$ integrating~\eqref{e:Sto2} over $B_k$, one has
\begin{\eqn} \label{e:Sto3}
p_{k+1} \leq \frac{q^2}{M^p} e_k+\frac {2C_2}{M^p} p_k.
\end{\eqn}
Moreover, by integrating~\eqref{e:Sto1} over $B_{k}$,

\begin{\eqn} \label{e:Sto4}
e_{k+1} \leq \E \big(|X(kT+T)|_{\e}^p 1_{B_{k}}\big)
\leq q^2e_k+2 C_2 p_k.
\end{\eqn}
\\
From~\eqref{e:Sto3} and~\eqref{e:Sto4}, one has
\begin{\eqn} \label{e:MatEP}
\left(
\begin{array}{c}
 e_{k+1} \\
 p_{k+1}
\end{array}
\right) \leq \left( \begin{array}{cc}
q^2 & 2C_2 \\
\frac{q^2}{M^p} & \frac{2C_2}{M^p} \end{array} \right) \left(
\begin{array}{c}
e_{k} \\
p_{k}
\end{array}
\right),
\end{\eqn}
\\
which clearly implies
\Be \label{e:RelEP}
q^2 e_{k+1}+2 C_2 p_{k+1} \leq \left(q^2+\frac{2C_2}{M^p}\right) (q^2 e_k+2C_2 p_k)
\Ee
We can choose $M=M(p, T, \alpha, \beta, \gamma, \e, \|F\|_{0})$ so
that
$$
q^2+2 C_2/M^p \leq q.
$$
Thus we clearly have from~\eqref{e:RelEP}
\begin{\eqn*}
q^2 e_{k}+2C_2 p_{k} \leq q^{k} \left(q^2 e_0+2C_2 p_0\right),
\end{\eqn*}
This inequality, together with the easy fact $p_k=\PP_x({\tau}^{\e} >kT)$,  immediately implies the required estimate~\eqref{e:TauPro} since
$C_2>1$ in inequality~\eqref{e:XtHe}.
\end{proof}

\begin{proof} [Proof of Theorem~\ref{t:TauMEst}]
By the definition of coupling and~\eqref{e:XtHe1}, for any $p \in
(0, \alpha)$ we have

\begin{\eqn} \E_x \bg(|X_1(T)|^p_{\e}+|X_2(T)|^p_\e\bg)=\E|X^{x_1}_T|^p_{\e}+\E |X^{x_2}_T|^p_\e \leq C_4
\left(1+|x_1|^p+|x_2|^p_\e\right)
\end{\eqn}
\\
where $C_4=C_4(p, T, \alpha, \beta, \gamma, \e, \|F\|_0)$.
\vskip1mm

  For any $x=(x_1,x_2) \in H \times H$, by Markov property,~\eqref{e:TauPro} and the above inequality,
 we easily have

 \begin{align}
 \begin{split}
 \E_x \left[e^{\eta  \tau^\e}\right]&=\E_x \left(e^{\eta  \tau^\e} 1_{\{\tau^\e \leq
 T\}}\right)+\E_x \left(e^{\eta \tau^\e} 1_{\{\tau^\e>
 T\}}\right) \\
 & \leq e^{\eta T}+\E_x \left\{1_{\{\tau^\e>
 T\}} \E_{X(T)} \left[e^{\eta \tau^\e}\right]\right\} \\
 &\leq e^{\eta T}+  C_5  \E_x \left[1+|X_1(T)|^{p}_{\e}+|X_2(T)|_\e^p\right]  \\
 & \leq C_6(1+|x_1|^{p}+|x_2|^p)
\end{split}
\end{align}
\\
where $C_i=C_i(p, \alpha,\eta,\gamma, \beta, \e,\|F\|_{0},T) \ (i=5,6)$.

\end{proof}

\subsubsection{Estimates of the hitting time  $\tau$}
\begin{thm} \label{t:TauREst}
For any $p \in (0, \alpha)$ and sufficiently large~$T>0$, there exist positive constants
$\lambda=\lambda(T, p, \alpha,\beta,\gamma, \|F\|_{0},r)$ and
$C=C(p, \alpha,\beta,\gamma, \|F\|_{0}, r, T)$ such that
\begin{\eqn} \label{e:TauREst}
\E_x[e^{\lambda \tau}]\leq C(1+|x_1|^{p}+|x_2|^p).
\end{\eqn}
\end{thm}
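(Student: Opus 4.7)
The plan is to combine Theorem~\ref{t:TauMEst} with an attempt-and-retry scheme to reach the small ball $B(r)^2$ from $B_\e(M)^2$. The critical geometric observation is that $B_\e(M)\subset H$ is compact, because $\gamma_k\to\infty$ makes the embedding $H^\e\hookrightarrow H$ compact. Combined with the strong Feller property (gradient bound~\eqref{grad1}) and irreducibility (Lemma~\ref{p:Irr}), this compactness yields
$$\delta:=\inf_{y\in B_\e(M)}\PP\bigl(X^y_T\in B(r/2)\bigr)>0,$$
and the coupling construction of Section~\ref{s6.1} (using independence outside~$B(r)^2$ and inequality~\eqref{e:IndN} inside, with the trivial case $x_1=x_2$) then gives $\tilde P_T(x,B(r/2)^2)\ge\delta^2$ for every $x\in B_\e(M)^2$.

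Next, I would set $\rho_0=\tau^\e$ and, as long as $X(\rho_n+T)\notin B(r/2)^2$, define $\rho_{n+1}=\inf\{kT\ge\rho_n+T: X(kT)\in B_\e(M)^2\}$. Writing $N=\inf\{n\ge 0:X(\rho_n+T)\in B(r/2)^2\}$, the previous bound gives $\PP(N>n\mid\mcl F_{\rho_n})\le 1-\delta^2$, and one has $\tau\le\rho_N+T$. The strong Markov property at $\rho_n+T$, together with Theorem~\ref{t:TauMEst} applied to the restart at $X(\rho_n+T)$ and Lemma~\ref{l:XtHe} with $\e=0$ (using $B_\e(M)^2\subset B(M\gamma_1^{-\e})^2$), produces a uniform bound
$$K(\mu):=\sup_{y\in B_\e(M)^2}\E_y\bigl[e^{\mu(\rho_1-\rho_0)}\bigr]<\infty$$
for $\mu$ up to some threshold $\mu_0>0$.

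The key step is a H\"older decomposition exploiting the success probability: for $q>1$ with conjugate exponent $q'$,
$$\E\bigl[e^{\lambda(\rho_{n+1}-\rho_n)}\,1_{\{N>n\}}\,\big|\,\mcl F_{\rho_n}\bigr]\le K(q\lambda)^{1/q}\,(1-\delta^2)^{1/q'}=:\kappa.$$
If $q\lambda\le\mu_0$, then $K(q\lambda)\le K(\mu_0)$ uniformly; letting $q\to\infty$ with $\lambda=\mu_0/q$ makes $K(q\lambda)^{1/q}\to 1$ while $(1-\delta^2)^{1/q'}\to 1-\delta^2<1$, so $\kappa<1$ for a suitable choice of $(q,\lambda)$. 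Iterating this estimate via the tower property and summing over $n$,
$$\E_x\bigl[e^{\lambda\tau}\bigr]\le e^{\lambda T}\sum_{n\ge 0}\E_x\bigl[e^{\lambda\rho_n}\,1_{\{N\ge n\}}\bigr]\le\frac{e^{\lambda T}}{1-\kappa}\,\E_x\bigl[e^{\lambda\tau^\e}\bigr],$$
and Theorem~\ref{t:TauMEst} controls the last factor by $C(1+|x_1|^p+|x_2|^p)$, yielding~\eqref{e:TauREst}.

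The main obstacle is the simultaneous calibration of $\lambda$ and $q$: one needs $q\lambda\le\mu_0$ in order for Theorem~\ref{t:TauMEst} to bound $K(q\lambda)$, yet $q$ must be taken large enough for the H\"older product to be a strict contraction. The positivity bound of the first step also deserves care, as it rests essentially on the compactness of $B_\e(M)$ in~$H$, which in turn comes from the asymptotic $\gamma_k\to\infty$ imposed in Assumption~\ref{a:A}.
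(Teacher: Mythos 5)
Your argument is correct and rests on exactly the same ingredients as the paper's proof: the exponential moment of the return time $\tau^{\e}$ to the compact set $B_\e(M)\times B_\e(M)$ (Theorem~\ref{t:TauMEst}, restarted via Lemma~\ref{l:XtHe} with $\e=0$ and the Poincar\'e bound $|z|\le\gamma_1^{-\e}|z|_\e$), and the uniform minorization $\inf_{y}\tilde P_T(y,B(r/2)\times B(r/2))\ge\delta^2$ over that compact set, which is precisely the content of Lemma~\ref{l:Irr2} (irreducibility plus the Feller/strong Feller estimate~\eqref{grad1} and the maximal-coupling inequality~\eqref{e:IndN}); your use of $B(r/2)$ even cleanly resolves the sum-versus-componentwise ball mismatch in the definition~\eqref{e:TauR}. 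Where you differ is in how these are assembled. The paper (Steps 3--4) bounds the tail $\PP_x(\tau=kT)$ directly, decomposing over the number $\rho_k=j$ of returns to $B_\e(M)\times B_\e(M)$ before time $kT$, estimating the few-returns part $I_1$ by Chebyshev together with $\E_x[e^{\eta\tau^\e_j}]\le c^j(1+M^p)^{j-1}(1+|x|^p)$, the many-returns part $I_2$ by $(1-\sigma)^j$, and optimizing the cutoff $l\sim\bar\eta kT$; the exponential moment then follows by summing over $k$. You instead run a regeneration-cycle argument: successive attempts from the compact set, a conditional H\"older bound $K(q\lambda)^{1/q}(1-\delta^2)^{1/q'}=\kappa<1$ per failed cycle (with the calibration $q\lambda\le\mu_0$, $q$ large, which you correctly identify and resolve since $K(\mu_0)^{1/q}\to1$), and a geometric series giving $\E_x[e^{\lambda\tau}]\le \frac{e^{\lambda T}}{1-\kappa}\E_x[e^{\lambda\tau^\e}]$. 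Both routes rigorize the same heuristic the paper sketches in (i)--(iii); yours yields the exponential moment directly and avoids the cutoff optimization, while the paper's version produces an explicit pointwise tail bound on $\PP_x(\tau=kT)$. The only housekeeping you should spell out is that $\rho_{n+1}$ is defined (and a.s.\ finite) irrespective of the success of attempt $n$, so that the conditional moment $\E[e^{q\lambda(\rho_{n+1}-\rho_n)}\mid\mcl F_{\rho_n}]\le K(q\lambda)$ can be invoked before inserting the failure indicator via H\"older; with that, the proof is complete.
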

\ \ \

 The key point of the proof is to use Theorem~\ref{t:TauMEst} and
Lemma~\ref{l:Irr2} below. The argument is quite general, for
simplicity, let us give its heuristic idea by using $(X_{kT})$, (note the
difference between $X_{kT}$ and $X(kT)$), as follows:
\begin{itemize}
\item [(i)] Since $B_\e(M)$ is compact in $H$, by
irreducibility and uniform strong Feller property we have that $\inf_{z \in
B_\e(0,M)} P_T(z, B(r))=p>0$. Therefore, as long as $X_{kT}$ is in
$B_\e(M)$, it has the probability at least $p$ to jump into $B(r)$
at $(k+1)T$.
\item [(ii)] Suppose that $(X_{kT})$ enters $B_\e(M)$ for $j$ times
\emph{before} it jumps into $B(r)$, by strong Markov property
and (i) this event happens with some probability less than
$(1-p)^j$.
\item [(iii)] If
$\tau=kT$ for some large $kT$ (i.e. the process first enters $B(r)$
at $kT$), $j$ is also large. Thus $\PP(\tau=kT) \leq (1-p)^j$ is
small.
\end{itemize}
\ \ \

Let us now make the above heuristic argument rigorous for $(X(kT))$.
We first need to establish the following lemma.
\begin{lem} \label{l:Irr2}
For any compact set ${\mathcal K}\subset H\times H$ and any $R>0$, there exists
some constant $\delta=\delta({\mathcal K},R)>0$ such that

\begin{\eqn}  \label{e:Irr2}
\inf_{x\in{\mathcal K}} \PP_x \{X(T) \in B(R) \times B(R)\}>0.
\end{\eqn}
\end{lem}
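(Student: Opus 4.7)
The plan is to bound $\PP_x\{X(T)\in B(R)\times B(R)\}$ from below by a product of single--marginal probabilities and then to invoke compactness. Set
$$
\phi(y) := P_T(y,B(R)) = \PP\{X_T^y \in B(R)\}, \qquad y\in H.
$$
Two properties of $\phi$ are crucial. First, $\phi$ is Lipschitz continuous on $H$: applying the uniform strong Feller estimate~\eqref{grad1} to $f=\mathbf{1}_{B(R)}$ (bounded Borel with $\|f\|_0 \le 1$) yields
$$
|\phi(y_1)-\phi(y_2)| \le \frac{C}{T^{1/\theta}\wedge 1}\,|y_1-y_2|,\qquad y_1,y_2\in H.
$$
Second, $\phi(y)>0$ for every $y\in H$: this is exactly irreducibility of $(X_t^y)$ at time $T$ on the open set containing the ball, as stated in Lemma~\ref{p:Irr}.

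Next, I would verify the pointwise inequality
$$
\PP_x\bigl\{X_1(T)\in B(R),\,X_2(T)\in B(R)\bigr\} \ge \phi(x_1)\,\phi(x_2), \qquad x=(x_1,x_2)\in H\times H,
$$
by splitting into the three cases in the definition of the transition $\tilde P_T(x,\cdot)$ given in Section~\ref{s6.1}. If $x_1=x_2$, the two components coincide and both have law $P_T(x_1,\cdot)$, so the left-hand side equals $\phi(x_1)$, which dominates $\phi(x_1)\phi(x_2)$ since $\phi\le 1$. If $x_1,x_2\in B(r)$ with $x_1\neq x_2$, the pair $(X_1(T),X_2(T))$ is a maximal coupling of $P_T(x_1,\cdot)$ and $P_T(x_2,\cdot)$, and inequality~\eqref{e:IndN} of Lemma~\ref{l:ExiMaxCou} gives precisely the required bound (with $A=B(R)$). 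Otherwise, $X_1(T)$ and $X_2(T)$ are independent with marginals $P_T(x_i,\cdot)$, so the inequality is an equality.

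To conclude, let $\mcl K_i\subset H$ be the projection of $\mcl K$ onto the $i$-th coordinate; each $\mcl K_i$ is compact as a continuous image of a compact set, and hence so is $\mcl K_1\cup \mcl K_2$. By continuity and strict positivity of $\phi$, the constant
$$
\delta_0 := \inf_{y\in \mcl K_1\cup \mcl K_2}\phi(y)
$$
is strictly positive, whence
$$
\inf_{x\in \mcl K}\PP_x\bigl\{X(T)\in B(R)\times B(R)\bigr\} \ge \delta_0^{\,2}>0,
$$
which proves~\eqref{e:Irr2}. The main obstacle is purely organizational: the three-case structure of the coupling transition has to be handled separately, but the common lower bound $\phi(x_1)\phi(x_2)$ works uniformly, with inequality~\eqref{e:IndN} doing all the real work in the maximal-coupling case; everything else is a straightforward application of the strong Feller and irreducibility properties already established in Theorem~\ref{t:SolEU} and Lemma~\ref{p:Irr}.
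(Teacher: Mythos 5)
Your proof is correct and follows essentially the same route as the paper's: the same three-case analysis of the coupled transition reducing everything to the product bound $\PP_x\{X(T)\in B(R)\times B(R)\}\ge P_T(x_1,B(R))\,P_T(x_2,B(R))$ via the maximal-coupling inequality~\eqref{e:IndN}, followed by uniform positivity of the marginal probabilities on a compact set. The only (harmless) variation is the final continuity step: you obtain it from the Lipschitz bound on $y\mapsto P_T(y,B(R))$ given by the gradient estimate~\eqref{grad1}, whereas the paper uses the Feller property to get lower semi-continuity of $x\mapsto P_T(x,O)$ together with irreducibility; both yield separation from zero on compacts.
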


\begin{proof}
To show~\eqref{e:Irr2}, we split the
 argument
 into the following three cases.

 (i) As $x \notin
B(r) \times B(r)$ with $x_1 \neq x_2$,
$X_1(T)$ and $X_2(T)$ are independent. Therefore, by Lemma
\ref{p:Irr} one has

\begin{\eqn*}
\begin{split} \PP_x (X(T) \in B(R) \times B(R))& =\PP_{x}\left(X_1 (T) \in
B(R)\right) \PP_{x}\left(X_2 (T) \in B(R)\right)
\\
&=\PP\left(X^{x_1}_T \in B(R)\right) \PP\left(X^{x_2}_T \in
B(R)\right)>0.
\end{split}
\end{\eqn*}
\\
(ii) As $x=(x_1,x_2)$ with $x_1=x_2$, we have $X_1(T)=X_2(T)$.
Hence,

\begin{\eqn*}
 \PP_x (X(T) \in B(R) \times B(R))=\PP\left(X^{x_1}_T \in B(R)\right)>0.
\end{\eqn*}
\\
(iii) As $x \in B(r) \times B(r)$ with $x_1 \neq x_2$, by the
maximal coupling property~\eqref{e:IndN} one has

\begin{\eqn*}
\begin{split}
\PP_x(X(T) \in B(R) \times B(R)) &=\PP_x(M(x) \in B(R) \times B(R)) \\
& \geq \PP_x(M_1(x) \in B(R))
\PP_x(M_2(x) \in B(R)) \\
&=\PP(X^{x_1}_T \in B(R)) \PP(X^{x_2}_T \in B(R)) >0,
\end{split}
\end{\eqn*}
where $M(x)=(M_1(x),M_2(x))$ is the maximal coupling of $(P^{*}_T
\delta_{x_1}, P^{*}_T \delta_{x_2})$.

>From (i)-(iii) it is clear that

\begin{\eqn*}
\PP_x (X(T) \in B(R) \times B(R)) \geq \PP(X^{x_1}_T \in B(R))
\PP(X^{x_2}_T \in B(R)).
\end{\eqn*}
\\
By Feller property of $P_T$ and Lemma~\ref{p:Irr}, for any open subset $O\subset H$ the function $x\mapsto P_T(x, O)$ is positive and lower
semi-continuous. Hence, it is separated from zero on any compact subset. Therefore, there is a constant
$\delta=\delta(x,R,T)>0$ so that
\begin{\eqn}
\inf_{x\in\mathcal K}\PP(X^{x_1}_T \in B(R)) \PP(X^{x_2}_T \in
B(R))>0.
\end{\eqn}
>From the above two inequality, we complete the proof.
\end{proof}

\begin{proof} [Proof of Theorem~\ref{t:TauREst}]
Take $M=M(p, T, \alpha, \beta, \gamma, \e, \|F\|_{0})$ defined in
Theorem~\ref{t:TauMEst}, and \emph{simply write}
$$|x|^p=|x_1|^p+|x_2|^p, \ \ \ \ \ x=(x_1, x_2) \in H \times H.$$
Let us prove the
theorem in the following four steps: \\

 \emph{Step
1}. Write ${\tau}^{\e} _0=0$, $\tau^\e_1=\tau^\e$ and define
\begin{\eqn*}
{\tau}^{\e} _{k+1}=\inf\{jT>{\tau}^{\e} _k; |X_1(jT)|_\e+|X_2(jT)|_\e \leq M \}
\end{\eqn*}
for all integer $k \geq 1$. Since $(X(kT))$ is a discrete time
Markov chain, it is strong Markovian. By Theorem~\ref{t:TauMEst} and
Poincare inequality $|z| \leq \frac{1}{\gamma^\e_1}|z|_\e$ for any
$z \in H^\e$, we have

\begin{\eqn} \label{e:TauKK-1}
\E_{X({\tau}^{\e} _k)}\left[e^{\eta ({\tau}^{\e} _{k+1}-{\tau}^{\e} _k)}\right] \leq C (1+|X({\tau}^{\e} _k)|^{p}) \leq
c(1+M^{p}),
\end{\eqn}
\\
where $c=C \left(1+2^{p}/\gamma^{\e p}_1\right)$ and $C=C(p,
\alpha,\beta,\gamma, \|F\|_{0}, r, T)$ is the same as in Theorem
\ref{t:TauMEst}. The above inequality, together with strong Markov
property, implies

\begin{\eqn} \label{e:TauKEst}
\begin{split}
\E_x[e^{\eta {\tau}^{\e} _k}] &=\E_x \left[e^{\eta {\tau}^{\e} _1}\E_{X({\tau}^{\e} _1)}\left[e^{\eta ({\tau}^{\e} _2-\tau^\e_1)}\cdots \E_{X(
\tau^\e_{k-1})}\left[e^{\eta ({\tau}^{\e} _k-{\tau}^{\e} _{k-1})}\right]\cdots \right]\right]\\
& \leq c^k (1+M^{p})^{k-1} (1+|x|^{p}).
\end{split}
\end{\eqn}
\vskip 3mm

 \emph{Step 2}.
 Since $B_\e(M) \subset \subset H$, by Lemma~\ref{l:Irr2} we have

\begin{\eqn*} 
\inf_{y \in B_\e(M) \times B_\e(M)} \PP_y \big(X(T) \in
B(r) \times B(r) \big)=\sigma,
\end{\eqn*}
\\
for all $r>0$, where $\sigma=\sigma(\e,M,r,T)>0$. Therefore, for some $\sigma \in (0,1)$,

\begin{\eqn} \label{e:UniIrr}
\inf_{|y|_\e \leq M} \PP_y \big(X(T) \in B(r) \times B(r) \big) \geq
\sigma,
\end{\eqn}
\\
where $|y|_\e=|y_1|_\e+|y_2|_\e$.
 \vskip 2mm

 \emph{Step 3}. Given any $k \in \N$, define
$$\rho_k=\sup\{j; \ \tau^\e_j \leq kT\}.$$
Clearly, ${\tau}^{\e} _{\rho_k+1}>kT$. For any $k \in \N$, one has
\begin{\eqn}
\begin{split}
\PP_x(\tau=kT)&=\sum_{j=0}^k \PP_x(\tau=kT, \rho_k=j) \\
&=\sum_{j=0}^l \PP_x(\tau=kT, \rho_k=j)+\sum_{j=l+1}^k
\PP_x(\tau=kT, \rho_k=j) \\
&=:I_1+I_2
\end{split}
\end{\eqn}
where $l<k$ is some integer number to be chosen later.
\vskip 1mm

\emph{Step 4}. Let us estimate the above $I_1$ and $I_2$. By the
definition of $\rho_k$, Chebyshev inequality and strong Markov
property, we have

\begin{\eqn*}
\begin{split}
\PP_x(\tau=kT, \rho_k=j) &\leq \PP_x \left(
\tau^\e_j>kT/2\right)+\PP_x\left(\tau^\e_j \leq kT/2, \ \rho_k=j \right) \\
&\leq \PP_x \left(\tau^\e_j>kT/2\right)+\PP_x\left(\tau^\e_j \leq kT/2, \ {\tau}^{\e} _{j+1}>kT \right)
\\
& \leq e^{-\eta kT/2} \E_x\left[e^{\eta {\tau}^{\e} _j}\right]+\E_x \left[\PP_{X({\tau}^{\e} _j)} \left({\tau}^{\e} _{j+1}-
\tau^\e_j>kT/2\right) \right]
\end{split}
\end{\eqn*}
\\
By~\eqref{e:TauKEst} and~\eqref{e:TauKK-1}, the above inequality
implies
\begin{\eqn*}
\PP_x(\tau=kT, \rho_k=j) \leq c^j(1+M^{p})^{j-1} (1+|x|^{p})e^{-\eta
kT/2}+c(1+M^{p}) e^{-\eta kT/2}.
\end{\eqn*}
 Hence,
\begin{\eqn}
\begin{split}
 I_1 & \leq  \left[c^{l+1}(1+M^{p})^{l+1} (1+|x|^{p})+l c(1+M^{p})
\right]e^{-\eta kT/2} \\
& \leq  c^{l+2} (1+M^{p})^{l+2} (1+|x|^{p}) e^{-\eta kT/2}.
\end{split}
\end{\eqn} \ \ \

Now we estimate $I_2$. For $j>l$, by the definitions of $\tau$ and
$\rho_k$, strong Markov property and~\eqref{e:UniIrr}, we have
\begin{\eqn*}
\begin{split} \PP_x\left(\tau=kT, \rho_k=j\right) \leq
\PP_x\left(|X({\tau}^{\e} _1)|>r, \ldots, |X({\tau}^{\e} _j)|>r\right) \leq (1-\sigma)^j.
\end{split}
\end{\eqn*}
 Hence,
\begin{\eqn}
I_2 \leq \frac{1}{\sigma}(1-\sigma)^{l+1}.
\end{\eqn}
Taking $\bar \eta=\frac{\eta}{4\log (c+cM^{p})}$ and $l=[\bar \eta k
T]$, we have
\begin{\eqn*}
\begin{split}
 I_1 \leq e^{-k \eta T/4}\bg(1+|x|^{p}\bg), \ \ I_2 \leq \frac
1\sigma \exp \big \{-k T \bar \eta \log \frac 1{1-\sigma}\big\}.
\end{split}
\end{\eqn*}
Combining the above estimates of $I_1$ and $I_2$, and taking
$2\lambda=\frac{\eta} 4 \wedge \bar \eta \log \frac 1{1-\sigma}$, we
have
\begin{equation*}
\PP_x(\tau=kT) \leq \left(c^2+\frac 1\sigma\right) e^{-2 \lambda kT}
\left (1+|x|^{p}\right)
\end{equation*}
 From the above inequality, we immediately obtain the desired estimate.
\end{proof}

\subsection{Final part of the coupling proof } \label{s5}
 It is divided into two steps.
\medskip

{\it Step~1}.
By the same reason as in Steps 1 and 2 in Section~\ref{s:Har}, to prove the uniqueness of an invariant measure and inequality~\eqref{2.5}, it suffices to show that
\begin{equation} \label{7.1}
\|P_{kT}(x_1,\cdot)-P_{kT}(x_2,\cdot)\|_{\rm TV}
\le C\,(1+|x_1|^p+|x_2|^p)e^{-ckT},
\quad x_1,x_2\in H,
\end{equation}
where $C$ and~$c$ are positive constants not depending on $x_1$, $x_2$, and~$k$.  Let $(X_1(t),X_2(t))$, $t\in T\Z$, be the chain constructed in Section~\ref{s6.1}. Define the stopping time
$$
\rho=\min\{kT: k\in\N, X_1(kT)=X_2(kT)\},
$$
where the minimum over an empty set is equal to~$+\infty$. Suppose we have proved that
\begin{equation}
\PP_x\{\rho>kT\}\le Ce^{-\eta kT}(1+|x_1|^p+|x_2|^p),\label{5.5}
\end{equation}
where $x=(x_1,x_2)\in H\times H$ is arbitrary, and the positive constants~$\eta$ and~$C$ do not depend on~$x$.
In this case, using the fact that $X_1(kT)=X_2(kT)$ for $k\ge l$ as soon as $X_1(lT)=X_2(lT)$, we can write
\begin{align*}
\bigl|P_{kT}(x_1,\Gamma)-P_{kT}(x_2,\Gamma)\bigr|
&=\bigl|\E_{x}1_\Gamma\bigl(X_1(kT)\bigr)
-\E_{x}1_\Gamma\bigl(X_2(kT)\bigr)\bigr|\\
&=\E_{x}\Bigl(1_{\{\rho>kT\}}\bigl|1_\Gamma\bigl(X_1(kT)\bigr)
-1_\Gamma\bigl(X_2(kT)\bigr)\bigr|\Bigr)\\
&\le \PP_x\{\rho>kT\}.
\end{align*}
Using~\eqref{5.5}, we obtain
$$
\bigl|P_{kT}(x_1,\Gamma)-P_{kT}(x_2,\Gamma)\bigr| \le Ce^{-\eta
kT}\,(1+|x_1|^p+|x_2|^p).
$$
Taking the supremum over all $\Gamma\in {\mathcal B}(H)$, we arrive
at the required inequality~\eqref{5.2}.

\medskip
{\it Step~2}.
Thus, it remains to establish~\eqref{5.5}. To this end, we first note that if~$r>0$ is sufficiently small, then
\begin{\eqn} \label{5.6}
\PP_x \left\{X_1(T)\ne X_2(T)\right\} \leq 1/2\quad
\mbox{for any $x\in B(r) \times B(r)$}.
\end{\eqn}
Indeed, by Theorem~\ref{t:Sol}, for any function $f\in B_b(H)$
with $\|f\|_0\le1$ we have
\begin{\eqn*}
\bigl|(P_T(x_1,\cdot),f)-(P_T(x_2,\cdot),f)\bigr|
=|P_Tf(x_1)-P_Tf(x_2)| \leq C_1 |x_1-x_2|\quad\mbox{for $x_1,x_2 \in H$}.
\end{\eqn*}

Recalling the definition of the total variation distance, we see that
$$
\|P_T(x_1,\cdot)-P_T(x_2,\cdot)\|_{\mathrm TV}\le1/2,
\quad x_1,x_2\in B(r),
$$
where $r>0$ is sufficiently small. Since $\bigl(X_1(T),X_2(T)\bigr)$
is a maximal coupling for the pair
$\bigl(P_T(x_1,\cdot),P_T(x_2,\cdot)\bigr)$, by~\eqref{e:InqTV} we
arrive at~\eqref{5.6}.

\smallskip
We now introduce the iterations~$\{\tau_n\}$ of the stopping time~$\tau$ defined by~\eqref{e:TauR}:
$$
\tau_1=\tau, \quad
\tau_{n+1}=\inf \left \{j T >\tau_{n}: |X_1(jT)|+|X_2(jT)| \leq r \right \}.
$$
An argument similar to that used in Step~1 of the proof of Theorem~\ref{t:TauREst} shows that
$$
\E_xe^{\lambda \tau_n}\le K^n(1+|x_1|^p+|x_2|^p),
$$
where $K>1$ and $\lambda>0$ do not depend on~$x_1,x_2\in H$ and $n\ge1$. By the Chebyshev inequality, it follows that
\begin{equation} \label{5.7}
\PP_x\{\tau_n>kT\}\le e^{-\lambda kT}K^n(1+|x_1|^p+|x_2|^p).
\end{equation}
Let us define the events
$$
\Gamma_n=\{X_1(\tau_m+T)\ne X_2(\tau_m+T)\mbox{ for $1\le m\le n$}\}
$$
and set $P_n(x)=\PP_x(\Gamma_n)$. By~\eqref{5.6} and the strong Markov property, we have
$$
\PP_x\bigl\{X_1(\tau_n+T)\ne X_2(\tau_n+T)\,|\,{\mathcal F}_{\tau_n}\bigr\}
\le \PP_{X(\tau_n)}\{X_1(T)\ne X_2(T)\} \leq 1/2
$$
It follows that
\begin{align*}
P_n(x)
&=\PP_x\bigl(\Gamma_{n-1}\cap\{X_1(\tau_n+T)\ne X_2(\tau_n+T)\}\bigr)\\
&=\E_x\bigl(1_{\Gamma_{n-1}}
\PP_x\{X_1(\tau_n+T)\ne X_2(\tau_n+T)\,|\,{\mathcal F}_{\tau_n}\}\bigr)
\le \frac12 P_{n-1}(x),
\end{align*}
whence, by iteration, we get $P_n(x)\le2^{-n}$ for any $n\ge1$. Combining this with~\eqref{5.7}, for any integers $n,k\ge1$ we obtain
\begin{align*}
\PP_x\{\rho>kT\}
&=\PP_x\{\rho>kT,\tau_n< kT\}+\PP_x\{\rho>kT,\tau_n\ge kT\}\\
&\le \PP_x(\Gamma_n)+\PP_x\{\tau_n\ge kT\}\\
&\le 2^{-n}+e^{-\lambda kT}K^n(1+|x_1|^p+|x_2|^p).
\end{align*}
Taking $n=\e k$ with a sufficiently small~$\e>0$, we arrive at the required inequality~\eqref{5.5}. The proof of Theorem~\ref{t:MaiThm} is complete.

\section{Proofs of exponential mixing when $\dim H< \infty$} \label{s6}
 First of all, by Theorem 2.5 of~\cite{PXZ10},
the system in~\eqref{strong} has at least one invariant measure. To prove Theorem~\ref{t:ErgFin}, we can use the Harris method or the  coupling argument.

In both approaches we need also the decay estimates for solutions
 given in Lemmas~\ref{l:XtHe} and~\ref{vai}. These
can be easily adapted to the strong solution~$X_t$
in~\eqref{strong} (indeed, by the Gronwall lemma, starting from~\eqref{strong}, we get $\E |Z_A(t)|^p <\infty$ for any $p \in (0, \alpha)$).

For the Harris approach, in order to verify  the two conditions in
Theorem~\ref{t:Har} we can
repeat the same argument as in Section~\ref{s:Har}.

\vskip 1mm For the coupling approach, the key point
 is irreducibility and gradient estimates of
 Theorem~\ref{t:Sol}. Using a
similar (but easier) argument as in Section~\ref{s4}, we can prove
Theorem~\ref{t:ErgFin} in the following three steps:
\begin{itemize}
\item[(1)]
constructing the coupling and defining the stopping
time $\tau$ exactly as in Section~\ref{s6.1};
\item[(2)]
proving the exponential estimate~\eqref{e:TauREst};
\item[(3)]
using the same
argument as in Section~\ref{s5} which involves  the coupling time.
\end{itemize}
Finally, let us emphasize that unlike the infinite-dimensional setting, we do not need to introduce $H^\e$ and ${\tau}^{\e}$ to get some
compactness, since any finite-dimensional closed ball is automatically compact.


\bibliographystyle{amsplain}

\end{document}